\documentclass[a4paper,10pt]{amsart}
\pagestyle{plain}

\usepackage{amssymb}
\usepackage{amsthm}
\usepackage{amsmath}
\usepackage{amscd}
\usepackage{enumerate}
\usepackage{algorithmic}
\usepackage{algorithm}
\usepackage{caption}
\usepackage[usenames,dvipsnames]{pstricks}
\usepackage{epsfig}
\usepackage{pst-grad} 
\usepackage{pst-plot} 
\usepackage{hyperref}




\newtheorem{thm}{Theorem}
\newtheorem{prop}[thm]{Proposition}
\newtheorem{lem}[thm]{Lemma}
\theoremstyle{definition}
\newtheorem{defi}[thm]{Definition}

\newtheorem{rmk}[thm]{Remark}
\newtheorem{rmks}[thm]{Remarks}

\newcommand{\Z}{\mathbb{Z}}

\newcommand{\R}{\mathbb{R}}
\newcommand{\Q}{\mathbb{Q}}
\newcommand{\C}{\mathbb{C}}
\newcommand{\Proj}{\mathbb{P}}
\newcommand{\proba}{\mathcal{P}}
\newcommand{\Hamil}{\mathbb{H}}
\newcommand{\order}{\mathcal{O}}
\newcommand{\hyp}{\mathcal{H}}
\newcommand{\hypb}{\mathcal{B}}

\newcommand{\act}{\cdot}
\newcommand{\fdom}{\mathcal{F}}
\newcommand{\quatalg}[2]{\left(\frac{#1}{#2}\right)}
\newcommand{\prm}{\mathfrak{p}}
\newcommand{\Loba}{\mathcal{L}}
\newcommand{\gp}[1]{\langle #1 \rangle}
\newcommand{\sm}[1]{\left(\begin{smallmatrix}#1\end{smallmatrix}\right)}
\newcommand{\tg}{\tilde{g}}
\newcommand{\tw}{\tilde{w}}

\newcommand{\tA}{\tilde{A}}
\newcommand{\tB}{\tilde{B}}
\newcommand{\tC}{\tilde{C}}
\newcommand{\tD}{\tilde{D}}
\newcommand{\sd}{\sqrt{2}}
\newcommand{\rs}{\sqrt{6}}

\newcommand{\nmg}{\|g\|}

\DeclareMathOperator{\SL}{SL}

\DeclareMathOperator{\PSL}{PSL}
\DeclareMathOperator{\PGL}{PGL}
\DeclareMathOperator{\SU}{SU}
\DeclareMathOperator{\PSU}{PSU}
\DeclareMathOperator{\GL}{GL}

\DeclareMathOperator{\mat}{\mathcal{M}}
\DeclareMathOperator{\trd}{trd}
\DeclareMathOperator{\nrd}{nrd}

\DeclareMathOperator{\dist}{d}

\DeclareMathOperator{\diff}{d\!}
\DeclareMathOperator{\vol}{Vol}
\DeclareMathOperator{\covol}{Covol}
\DeclareMathOperator{\charac}{char}
\DeclareMathOperator{\Ext}{Ext}
\DeclareMathOperator{\Int}{Int}
\DeclareMathOperator{\I}{I}
\DeclareMathOperator{\reduc}{Red}

\DeclareMathOperator{\rad}{rad}
\DeclareMathOperator{\invrad}{invrad}
\DeclareMathOperator{\tr}{tr}
\DeclareMathOperator{\ball}{B}

\captionsetup{figurewithin=section,tablewithin=section,justification=centering}

\title{Computing arithmetic Kleinian groups}
\author{Aurel Page}\thanks{
Aurel Page\\
Univ. Bordeaux, IMB, UMR 5251, F-33400 Talence, France.\\
CNRS, IMB, UMR 5251, F-33400 Talence, France.\\
INRIA, F-33400 Talence, France.\\
\texttt{aurel.page@math.u-bordeaux1.fr}}
\date{}

\begin{document}

\begin{abstract}
Arithmetic Kleinian groups are arithmetic lattices in~$\PSL_2(\C)$. We present an algorithm that, given such a group~$\Gamma$, returns a fundamental domain and a finite presentation for~$\Gamma$ with a computable isomorphism.
\end{abstract}

\maketitle

\tableofcontents
\newpage

\section*{Introduction}

An arithmetic Kleinian group~$\Gamma$ is a discrete subgroup of~$\PGL_2(\C)$ with finite covolume that is commensurable with the image of the integral points of a form of~$\GL_2$ defined over a number field under the natural surjection~$\GL_2(\C)\to\PGL_2(\C)$.
Our main results are new deterministic and probabilistic algorithms for constructing fundamental domains for the action of arithmetic Kleinian groups~$\Gamma$ on hyperbolic three-space that produce a finite presentation for~$\Gamma$. There is a substantial literature concerning such algorithms, some of which we review below. We compare our algorithms to recent ones and discuss numerical evidence suggesting that ours are more efficient. The algorithm presented here prepares the ground for computing the cohomology of these groups with the action of Hecke operators, which gives a concrete realization of certain automorphic forms by the Matsushima-Murakami formula \cite{borelwallach}. By the Jacquet-Langlands correspondence \cite{jacquet1972automorphic}, such forms are essentially the same as automorphic forms for~$\GL_2$ over some number field. They should have attached Galois representations, but the construction of these representations in general is still an open problem.
More generally, the integral (co)homology of such groups has recently received a lot of attention: for example the size of their torsion \cite{bvtorsion} and arithmetic functoriality \cite{venkcaltorsion} are being actively studied. Our algorithm allows for empirical study of these objects.

%
%
\par The problem of computing fundamental domains for such groups is well studied. In the analogous Fuchsian group case, i.e. a subgroup of~$\PSL_2(\R)$, an algorithm may have been known to Klein and J. Voight \cite{voightfuchsian} has described and implemented an efficient algorithm exploiting reduction theory. In the special case of Bianchi groups, i.e. when the base field is imaginary quadratic and the group is split, R.G.~Swan \cite{swan} has described an algorithm, which was implemented by Riley~\cite{riley} and A. Rahm \cite{rahmhomologies}; D.~Yasaki \cite{yasaki2010hyperbolic} has described and implemented another algorithm based on Vorono\" i theory. C.~Corrales, E.~Jespers, G.~Leal and \'A.~del~R\'io \cite{Corrales} have described an algorithm for the general Kleinian group case. They implemented it for one nonsplit group with imaginary quadratic base field. Our algorithm and implementation are more general, and experimentally more efficient in practice.
We have recently found an unpublished algorithm of K.~N.~Jones and A.~W.~Reid, mentioned and briefly described in~\cite[section~3.1]{chinburg2001arithmetic} that solves the same problem.
\par The article is organized as follows. In the first section we recall basic definitions and properties of hyperbolic geometry, quaternion algebras and Kleinian groups. In the second section we describe our algorithms: basic procedures to work in the hyperbolic $3$-space, algorithms for computing a Dirichlet domain and a presentation with a computable isomorphism for a cocompact Kleinian group, and how to apply these algorithms to arithmetic Kleinian groups. In the third section we show examples produced by our implementation of these algorithms and comment on their running time. 

\bigskip

I would like to thank John Voight for proposing me this project and supervising my master thesis, and Karim Belabas and Andreas Enge for their helpful comments on earlier versions of this article. Experiments presented in this paper were carried out using the PLAFRIM experimental testbed, being developed under the Inria PlaFRIM development action with support from LABRI and IMB and other entities: Conseil R\'egional d'Aquitaine, FeDER, Universit\'e de Bordeaux and CNRS (see \url{https://plafrim.bordeaux.inria.fr/}). This research was partially funded by ERC Starting Grant ANTICS 278537.

\section{Arithmetic Kleinian groups}
Here we recall basic definitions and properties of hyperbolic geometry, quaternion algebras and Kleinian groups. The general reference for this section is \cite{mac}.
\subsection{Hyperbolic geometry}
The reader can find more about hyperbolic geometry in \cite{foundations}. The \emph{upper half-space} is the Riemannian manifold~$\hyp^3 = \C\times\R_{>0}$ with Riemannian metric given by 
\[\diff s^2 = \frac{\diff x^2+\diff y^2+\diff t^2}{t^2}\]
where $(z,t)\in\hyp^3$, $z=x+iy$ and $t>0$.
For $w,w'\in\hyp^3$, $\dist(w,w')$ is the distance between $w$ and $w'$.
The set $\Proj^1(\C)$ is called the \emph{sphere at infinity}. 
The upper half-space is a model of the hyperbolic~$3$-space, i.e. the unique connected, simply connected Riemannian manifold  with constant sectional curvature~$-1$. In this space, the volume of the ball of radius~$r$ is~$ \pi(\sinh(2r)-2r)$.
\par The group~$\PSL_2(\C)$ acts on~$\hyp^3$ in the following way. Consider the ring of Hamiltonians~$\Hamil = \C + \C j$ with multiplication given by~$j^2=-1$ and~$jz=\bar{z}j$ for~$z\in\C$, and identify~$\hyp^3$ with the subset~$\C+\R_{>0}j\subset\Hamil$. Then for an element~$g=\sm{a & b\\ c & d}\in\SL_2(\C)$ and $w\in\hyp^3$, the formula
\begin{equation*}g\act w = (aw+b)(cw+d)^{-1}=(wc+d)^{-1}(wa+b)\label{eqh3}\end{equation*}
defines an action of~$\PSL_2(\C)$ on~$\hyp^3$ by orientation-preserving isometries. This action is transitive and the stabilizer of the point~$j\in\hyp^ 3$ in $\PSL_2(\C)$ is the subgroup~$\PSU_2(\C)$.
\par The trace of an element of~$\PSL_2(\C)$ is defined up to sign, and we have the following classification of conjugacy classes in $\PSL_2(\C)$:
\begin{itemize}
	\item If $\tr(g)\in\C\setminus[-2,2]$, then $g$ has two distinct fixed points in $\Proj^1(\C)$, no fixed point in~$\hyp^3$ and stabilizes the geodesic between its fixed points, called its \emph{axis}. The element~$g$ is conjugate to $\pm\sm{\lambda & 0 \\ 0 & \lambda^{-1}}$  with $|\lambda|>1$; it is called \emph{loxodromic}.
	\item If $\tr(g)\in (-2,2)$, then $g$ has two distinct fixed points in $\Proj^1(\C)$, and fixes every point in the geodesic between these two fixed points. The element~$g$ is conjugate to~$\pm\sm{e^{i\theta} & 0 \\ 0 & e^{-i\theta}}$ with $\theta\in\R\setminus(\pi+2\pi\Z$); it is called \emph{elliptic}.
	\item If $\tr(g) = \pm 2$, then $g$ has one fixed point in $\Proj^1(\C)$ and no fixed point in~$\hyp^3$. It is conjugate to $\pm\sm{1 & 1 \\ 0 & 1}$; it is called \emph{parabolic}.
\end{itemize}

\subsection{The unit ball model}

In actual computations we are going to work with another model of the hyperbolic $3$-space. The \emph{unit ball} $\hypb$ is the open ball of center~$0$ and radius $1$ in~$\R^3\cong \C+\R j\subset\Hamil$, equipped with the Riemannian metric
\[\diff s^2 = \frac{4(\diff x^2+\diff y^2+\diff t^2)}{(1-|w|^2)^2}\]
where $w=(z,t)\in\hypb$, $z=x+iy$ and $|w|^2=x^2+y^2+t^2< 1$. The \emph{sphere at infinity}~$\partial\hypb$ is the Euclidean sphere of center $0$ and radius~$1$. 
The distance between two points~$w,w'\in\hypb$ is given by the explicit formula
\[\dist(w,w')=\cosh^{-1}\left(1+2\dfrac{|w-w'|^2}{(1-|w|^2)(1-|w'|^2)}\right)\text{.}\]
The upper half-space and the unit ball are isometric, the isometry being given by
\begin{equation*}
\eta \colon
\left\{
	\begin{aligned}
		\hyp^3 	& \longrightarrow \hypb\\
		w 		& \longmapsto (w-j)(1-jw)^{-1}=(1-wj)^{-1}(w-j)\text{,}
	\end{aligned}
\right.
\end{equation*}
and the corresponding action of an element~$g=\sm{a & b\\ c & d}\in\PSL_2(\C)$ on a point~$w\in\hypb$ is given by~\begin{equation}\label{actionformula}g\act w = (Aw+B)(Cw+D)^{-1}\end{equation}
	where
	\begin{equation*}A=a+\bar{d}+(b-\bar{c})j,\ B=b+\bar{c}+(a-\bar{d})j,\end{equation*}
	\begin{equation*}C=c+\bar{b}+(d-\bar{a})j,\ D=d+\bar{a}+(c-\bar{b})j\text{.}\end{equation*}

\medskip


In the unit ball model, the geodesic planes are the intersections with~$\hypb$ of Euclidean spheres and Euclidean planes orthogonal to the sphere at infinity, and the geodesics are the intersections with~$\hypb$ of Euclidean circles and Euclidean straight lines orthogonal to the sphere at infinity. A \emph{half-space} is an open connected subset of~$\hypb$ with boundary consisting of a geodesic plane. A \emph{convex polyhedron} is the intersection of a set of half-spaces, such that the corresponding set of geodesic planes is locally finite.

\subsection{The Lobachevsky function and volumes of tetrahedra}

We are going to compute hyperbolic volumes, and for this the main tool is going to be the Lobachevsky function, which we define here. The integral
\[-\int_0^{\theta}\ln |2\sin u|\diff u\]
converges for $\theta\in\R\setminus\pi\Z$ and admits a continuous extension to $\R$ that is odd and periodic with period~$\pi$. This extension is called the \emph{Lobachevsky function} $\Loba(\theta)$. The Lobachevsky function admits a power series expansion, converging for~$\theta\in [-\pi,\pi]$:
\[\Loba(\theta)=\theta\left(1-\ln(2|\theta|)+\sum_{n=1}^\infty\frac{\zeta(2n)}{n(2n+1)}\Bigl(\frac{\theta}{\pi}\Bigr)^{2n}\right)\text{.}\]
With this function we can derive a formula for the volume of a certain standard tetrahedron. We will use it to compute the volume of convex polyhedra.

\begin{prop}\label{propvol}Let $T$ be the tetrahedron in $\hyp^3$ with one vertex at $\infty$ and the other vertices $A,B,C$ on the unit hemisphere projecting vertically onto~$A',B',C'$ in $\C$ with $A'=0$ to form a Euclidean triangle, with angles $\frac{\pi}{2}$ at~$B'$ and~$\alpha$ at $A'$, and such that the angle along $BC$ is $\gamma$. Then the volume of~$T$ is finite and given by
\[\vol(T)=\frac14\left[ \Loba(\alpha+\gamma) + \Loba(\alpha-\gamma) + 2\Loba\left(\frac{\pi}{2}-\alpha\right) \right]\text{.}\]
\end{prop}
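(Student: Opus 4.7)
The plan is to compute the volume by direct integration in the upper half-space model, exploiting the fact that one vertex lies at $\infty$. Since the three faces incident to $\infty$ are vertical half-planes over the edges $A'B'$, $B'C'$, $A'C'$, the tetrahedron projects vertically onto the Euclidean triangle $A'B'C'$, and above each point $z\in A'B'C'$ it extends from the unit hemisphere at height $t=\sqrt{1-|z|^2}$ up to $t=\infty$. Using the volume element $\diff x\,\diff y\,\diff t/t^3$ and performing the inner $t$-integration, I obtain
\[\vol(T)=\iint_{A'B'C'}\frac{\diff x\,\diff y}{2(1-|z|^2)}.\]

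I then switch to polar coordinates $(\rho,\theta)$ centered at $A'=0$. Placing $A'B'$ along the positive real axis with $|A'B'|=b$, the right angle at $B'$ and angle $\alpha$ at $A'$ mean the triangle is described by $0\le\theta\le\alpha$ and $0\le\rho\le b/\cos\theta$. The $\rho$-integral gives
\[\vol(T)=-\frac14\int_0^\alpha\ln\!\left(1-\frac{b^2}{\cos^2\theta}\right)\diff\theta.\]

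The main step is to identify $b=\cos\gamma$. Since the hyperbolic metric in $\hyp^3$ is conformally Euclidean, hyperbolic angles equal Euclidean angles, so the dihedral angle along $BC$ can be computed as the Euclidean angle at any point of $BC$ between the inward tangent vectors to the two faces meeting there (the face $BCV_\infty$ in the vertical plane $x=b$ and the face $ABC$ on the unit hemisphere). A direct tangent-vector computation at a generic point of the semicircle $BC$ shows this cosine is $b$, independent of the point (confirming that the dihedral angle is constant along the edge, as it must be), giving $b=\cos\gamma$.

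The final step is purely analytic. Using $\cos^2\theta-\cos^2\gamma=\sin(\theta+\gamma)\sin(\gamma-\theta)$ and $\cos\theta=\sin(\pi/2-\theta)$, I split the logarithm into three pieces and evaluate each using the definition $\Loba(\theta)=-\int_0^\theta\ln|2\sin u|\,\diff u$ after the substitutions $u=\theta+\gamma$, $u=\gamma-\theta$, $u=\pi/2-\theta$. The $\alpha\ln 2$ terms cancel, $\Loba(\pi/2)=0$ (by oddness and $\pi$-periodicity), the terms $\pm\Loba(\gamma)$ cancel, and invoking $\Loba(\alpha-\gamma)=-\Loba(\gamma-\alpha)$ yields the claimed formula. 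The expected obstacle is the geometric identification $b=\cos\gamma$; once that is in hand, the remaining manipulation is routine. Finiteness of the volume follows immediately since all three integrands are integrable on $[0,\alpha]$ (the logarithmic singularity at $\theta=\gamma$ would arise only when an ideal vertex appears, i.e. on the boundary case $|C'|=1$).
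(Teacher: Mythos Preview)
Your argument is correct. The paper itself does not give a proof of this proposition but simply refers to \cite[paragraph~1.7]{mac}; what you have written is precisely the standard computation one finds there: integrate the hyperbolic volume form over the vertical prism above the Euclidean triangle~$A'B'C'$, pass to polar coordinates, identify the dihedral angle via $b=\cos\gamma$ (your normal-vector computation, or equivalently the observation that the tangent plane to the unit sphere at a point with $x$-coordinate~$b$ makes Euclidean angle $\arccos b$ with the vertical plane $x=b$), and then split the logarithm using $\cos^2\theta-\cos^2\gamma=\sin(\gamma+\theta)\sin(\gamma-\theta)$ to reduce to the defining integral of~$\Loba$. The cancellation of the $\alpha\ln 2$ contributions and of the $\pm\Loba(\gamma)$ terms, together with $\Loba(\pi/2)=0$ and the oddness of~$\Loba$, yields the stated formula exactly as you describe. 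One small remark: the geometric constraint $|C'|=b/\cos\alpha\le 1$ forces $\gamma\ge\alpha$, so on $[0,\alpha]$ the argument of the logarithm is nonnegative and the integral is proper (with a harmless integrable logarithmic singularity at $\theta=\alpha$ in the ideal-vertex case $\gamma=\alpha$), which justifies dropping the absolute values when you invoke the definition of~$\Loba$.
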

\begin{proof}
 This formula can be found in \cite[paragraph 1.7]{mac}.
\end{proof}

\subsection{Kleinian groups, Dirichlet domains and exterior domains}\label{sectiondirdom}

A subgroup~$\Gamma$ of~$\PSL_2(\C)$ is a \emph{Kleinian group} if it acts discontinuously on~$\hyp^3$, or equivalently if it is a discrete subgroup of~$\PSL_2(\C)$. A \emph{fundamental domain} for $\Gamma$ is an open subset $\fdom$ of $\hyp^3$ such that
\begin{enumerate}[(i)]
	\item $\bigcup_{\gamma\in\Gamma}\gamma\overline{\fdom} = \hyp^3$;
	\item For all $\gamma\in\Gamma\setminus\{1\},\ \fdom\cap\gamma\fdom = \emptyset$;
	\item\label{condvol} $\vol(\partial\fdom) = 0$
\end{enumerate}
where~$\vol$ is the Riemannian volume on~$\hyp^3$.
To compute a fundamental domain for a Kleinian group~$\Gamma$, we are going to use the standard construction of Dirichlet domains. 
Let $p\in\hypb$ be a point with trivial stabilizer in $\Gamma$. Then the \emph{Dirichlet domain} centered at~$p$
\[D_p(\Gamma) = \{x\in\hypb\ | \text{ for all } \gamma\in\Gamma\setminus\{1\},\ \dist(x,p) < \dist(\gamma x,p)\}\]
is a convex fundamental polyhedron for $\Gamma$. If~$\Gamma$ has finite covolume, then the closure of~$D_p(\Gamma)$ has finitely many faces. A Kleinian group~$\Gamma$ is \emph{geometrically finite} if the closure of one (equivalently, every) Dirichlet domain for~$\Gamma$ has finitely many faces.

Note that since~$\Gamma$ acts properly discontinuously on~$\hypb$, every point outside a zero measure, closed subset of~$\hypb$ has a trivial stabilizer in~$\Gamma$. In the unit ball model, the Dirichlet domain centered at~$0$ has a simple description. Consider an element~$g\in\SL_2(\C)$ not fixing $0\in\hypb$. Let
\begin{itemize}
	\item $\I(g)=\{w\in\hypb\ |\ \dist(w,0)=\dist(g w,0)\}$;
	\item $\Ext(g)=\{w\in\hypb\ |\ \dist(w,0)<\dist(g w,0)\}$;
	\item $\Int(g)=\{w\in\hypb\ |\ \dist(w,0)>\dist(g w,0)\}$.
\end{itemize}
We call $\I(g)$ the \emph{isometric sphere} of $g$. For a subset~$S\subset\SL_2(\C)$ such that no element of $S$ fixes $0$, the \emph{exterior domain} of $S$ is~$\Ext(S)=\bigcap_{g\in S}\Ext(g)$. The set~$S$ is a \emph{defining set} for $\Ext(S)$. A \emph{minimal defining set} for $\Ext(S)$ is a subset~$S'\subset S$ such that $\Ext(S')=\Ext(S)$ and for all $g\in S'$, the geodesic plane~$\I(g)$ contains a face of~$\overline{\Ext(S)}$.

\medskip

With these definitions it is clear that~$D_0(\Gamma) = \Ext(\Gamma\setminus\{1\})$. Note that for all~$p\in\hypb$ with trivial stabilizer in~$\Gamma$,~$D_p(\Gamma)=u D_0(u^{-1}\Gamma u)$ where~$u\in\PSL_2(\C)$ is such that~$p = u\act 0$, so there is no harm in restricting to the Dirichlet domain centered at~$0$. Consider an element~$g\in\SL_2(\C)$ and $A,B,C,D$ as in formula~\eqref{actionformula}. Then $g\act 0=0$ if and only if $C=0$ and, if $g$ does not fix $0$, then a simple but lengthy computation reveals that~$\I(g)$ is the intersection of~$\hypb$ and the Euclidean sphere of center~$w$ and radius~$r$, where 
\begin{equation}\label{eqradius}
 w = -C^{-1}D\text{ and }r = 2/|C|\text{,}
\end{equation}
and that~$\Int(g)$ is the interior of this sphere. The details are in~\cite[Proposition 3.1.6]{monmemoire}.

\medskip

Another property of Dirichlet domains is their rich structure: it gives a presentation for the group, and also necessary and sufficient conditions for an exterior domain to be a fundamental fomain. Suppose~$\Gamma$ is a Kleinian group in which~$0$ has trivial stabilizer, and let~$g,h\in\Gamma$. Then we have~$\I(g)=\I(h)$ if and only if~$g=h$. We also have~$g\I(g)=\I(g^{-1})$, and a point~$x\in\I(g)$ is in the defining set of $D_0(\Gamma)$ if and only if $gx\in\I(g^{-1})$ is too.
\par From this, we can group the faces of~$\overline{D_0(\Gamma)}$ in pairs, one contained in some~$\I(g)$ and the other contained in~$\I(g^{-1})$, and~$g,g^{-1}$ send the faces to each other. This is the~\emph{face pairing} structure, and the elements~$g$ such that~$\I(g)$ contains a face of~$\overline{D_0(\Gamma)}$ are called the~\emph{face pairing transformations}. They generate the group~$\Gamma$.
\par Now we are going to look for relations. The first type comes from edge cycles: consider an edge~$e_1$ of~$\overline{D_0(\Gamma)}$ contained in some~$\I(g)\cap \I(h)$, and let~$g_1=g$. We define inductively a sequence of edges and elements in~$\Gamma$ in the following way. We let~$e_{n+1} = g_n e_n$. Then~$e_{n+1}$ is contained in~$\I(g_n^{-1})\cap\I(g_{n+1})$ for a unique~$\I(g_{n+1})$ (see Figure~\ref{cycle2d}). If~$\overline{D_0(\Gamma)}$ has finitely many faces, then the sequence~$(e_n,g_n)_n$ is periodic, let~$m$ be its period. The sequence of edges~$C=(e_1,\dots,e_m)$ is a~\emph{cycle} of edges, and~$m$ is its~\emph{length}. The~\emph{cycle transformation} at~$e_1$ is~$h=g_mg_{m-1}\dots g_1$, and has the property:
\begin{enumerate}[(i)]
\item The cycle transformation at~$e_1$ fixes~$e_1$ pointwise.
\end{enumerate}
This implies that~$h$ satisfies the~\emph{cycle relation}~$h^\nu=1$ for some integer~$\nu$. If~$\nu\neq 1$, the cycle is called~\emph{elliptic}. At every edge~$e_i$, the geodesic planes~$\I(g_i^{-1})$ and~$\I(g_{i+1})$ make an angle~$\alpha(e_i)$ inside~$D_0(\Gamma)$.
The~\emph{cycle angle} of~$C$ is~$\alpha(C)=\sum_{i=1}^m \alpha(e_i)$. Since the translates of~$D_0(\Gamma)$ cover a neighborhood of~$e_1$, we have the property:
\begin{enumerate}[(i)]\setcounter{enumi}{1}
  \item The cycle angle is~$\frac{2\pi}{\nu}$ where~$\nu$ is the order of the cycle transformation.
\end{enumerate}

\par The second type of relations comes from elements of order~$2$: it may happen that~$\I(g)=\I(g^{-1})$, then the element~$g$ satisfies the~\emph{reflection relation}~$g^2=1$.

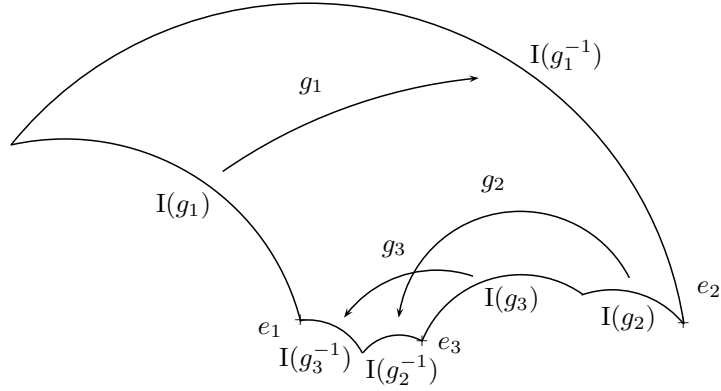
\begin{figure}[H]
\centering
\scalebox{1} 
{
\begin{pspicture}(1.5,2.55)(13.385,7.52)
\psarc[linewidth=0.02](6.4,2.51){5.0}{8.5752325}{141.3402}
\psarc[linewidth=0.02](3.2,2.51){3.2}{14.036243}{102.83561}
\psarc[linewidth=0.02](10.4,2.51){1.2}{38.65981}{109.53665}
\psarc[linewidth=0.02](9.2,2.51){1.4}{54.29331}{158.49857}
\psarc[linewidth=0.02](7.6,2.51){0.6}{60.25512}{142.43141}
\psarc[linewidth=0.02](6.4,2.51){0.8}{26.565052}{97.76517}
\psdots[dotsize=0.12,dotstyle=+](11.34,3.27)
\psdots[dotsize=0.12,dotstyle=+](7.9,3.03)
\psdots[dotsize=0.12,dotstyle=+](6.3,3.31)
\psarc[linewidth=0.02,arrowsize=0.05291667cm 2.0,arrowlength=1.4,arrowinset=0.4]{<-}(9.31,-0.48){7.03}{95.41553}{125.0031}
\psarc[linewidth=0.02,arrowsize=0.05291667cm 2.0,arrowlength=1.4,arrowinset=0.4]{->}(9.19,3.14){1.61}{26.87814}{172.87498}
\psarc[linewidth=0.02,arrowsize=0.05291667cm 2.0,arrowlength=1.4,arrowinset=0.4]{->}(8.09,2.48){1.49}{71.21138}{144.75243}
\usefont{T1}{ptm}{m}{n}
\rput(5.90125,3.18){$e_1$}
\rput(11.675781,3.72){$e_2$}
\rput(8.268281,2.98){$e_3$}
\rput(6.442656,6.42){$g_1$}
\rput(8.837188,5.12){$g_2$}
\rput(7.5296874,4.26){$g_3$}
\rput(4.7610936,4.82){$\I(g_1)$}
\rput(9.800468,6.82){$\I(g_1^{-1})$}
\rput(10.615625,3.33){$\I(g_2)$}
\rput(7.64,2.64){$\I(g_2^{-1})$}
\rput(9.08,3.58){$\I(g_3)$}
\rput(6.5,2.8){$\I(g_3^{-1})$}
\end{pspicture} 
}
\caption{A length three cycle in a planar cut}\label{cycle2d}
\end{figure}

\begin{thm}[Poincar\'e]\label{presthm}Let $D=D_0(\Gamma)$ be the Dirichlet domain of a geometrically finite Kleinian group~$\Gamma$. Then the face pairing transformation generate the group~$\Gamma$, and the reflection relations together with the cycle relations form a complete set of relations for~$\Gamma$.
\end{thm}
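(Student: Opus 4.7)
For generation, the plan is to exploit the tessellation of $\hypb$ by $\Gamma$-translates of $\overline{D}=\overline{D_0(\Gamma)}$. Given $\gamma\in\Gamma$, I would connect $0$ to $\gamma\cdot 0$ by a smooth path meeting only the interiors of faces of the tessellation, and track the chambers $\overline{D}=g_0\overline{D},g_1\overline{D},\dots,g_n\overline{D}=\gamma\overline{D}$ it enters in order. Consecutive chambers are separated by a wall $g_i\cdot\I(s_i)$ with $s_i$ a face pairing transformation, so $g_{i+1}=g_is_i^{-1}$ and $\gamma=s_0^{-1}s_1^{-1}\cdots s_{n-1}^{-1}$ is a word in face pairings; geometric finiteness guarantees only finitely many walls are crossed.

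For completeness of the relations, I would form the abstract group $\widetilde{\Gamma}$ presented by one generator $\tilde{s}$ per face pairing transformation $s$, with the reflection and cycle relations as relators. Let $\pi\colon\widetilde{\Gamma}\to\Gamma$ be the induced surjection; the goal is injectivity. Following the classical Poincar\'e polyhedron argument as presented in \cite{mac}, the plan is to build a ``developed'' space $\widetilde{X}=(\overline{D}\times\widetilde{\Gamma})/{\sim}$ by gluing copies of $\overline{D}$ indexed by $\widetilde{\Gamma}$ via $(x,\tilde{h})\sim(sx,\tilde{h}\tilde{s}^{-1})$ whenever $x$ lies on the face of $\overline{D}$ carried by $\I(s)$; equip $\widetilde{X}$ with the natural length metric, so that each sheet $\overline{D}\times\{\tilde{h}\}$ embeds isometrically, and define the developing map $p\colon\widetilde{X}\to\hypb$ by $p([x,\tilde{h}])=\pi(\tilde{h})\cdot x$, which is well defined because $\pi$ kills the defining relations. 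Once $p$ is shown to be a covering map, simple connectivity of $\hypb$ forces it to be a homeomorphism, and the free transitive $\widetilde{\Gamma}$-action on chambers of $\widetilde{X}$ corresponds under $p$ to the $\Gamma$-action on chambers of $\hypb$, which gives injectivity of $\pi$.

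The main obstacle, and the real content of the theorem, is verifying that $p$ is a local isometry at every point. On chamber and face interiors this follows immediately from the gluing rule and the identity $g\I(g)=\I(g^{-1})$, with the reflection relation $\tilde{s}^2=1$ handling precisely the degenerate case $\I(s)=\I(s^{-1})$. The delicate case is an interior edge point lying on a cycle $C=(e_1,\dots,e_m)$ whose cycle transformation $h$ has order $\nu$: one must check that the $m\nu$ copies of $\overline{D}$ arranged about the edge in $\widetilde{X}$ assemble into a full hyperbolic neighborhood. This is exactly what the cycle angle identity $\nu\cdot\alpha(C)=2\pi$ and the cycle relation $h^\nu=1$ jointly encode---the first matches the total dihedral angle with $2\pi$ in $\hypb$, and the second ensures the holonomy of a small loop about the edge is trivial in $\widetilde{\Gamma}$. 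Geometric finiteness then reduces the remaining verification at vertices to a finite local combinatorial check, completing the proof that $p$ is a covering map and hence a homeomorphism.
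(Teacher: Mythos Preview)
The paper does not actually prove this theorem: immediately after Theorem~\ref{poincthm} it disposes of both Theorem~\ref{presthm} and Theorem~\ref{poincthm} with the single line ``Both theorems are a special case of the second Theorem in~\cite{maskit1971poincar}.'' Your sketch is precisely the classical Poincar\'e polyhedron argument that underlies that reference (build the developed space~$\widetilde{X}$, show the developing map is a covering of~$\hypb$, use simple connectivity), so in substance you are reproducing the cited proof rather than giving a different one.

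Two places are worth tightening. First, in the generation step, the finiteness of wall-crossings follows from compactness of the path together with local finiteness of the tessellation (i.e.\ proper discontinuity of~$\Gamma$), not from geometric finiteness; geometric finiteness only tells you that~$\overline{D}$ has finitely many faces. Second, and more substantively, the passage from ``$p$ is a local isometry'' to ``$p$ is a covering'' is not automatic: one needs~$\widetilde{X}$ to be a complete length space (or an equivalent hypothesis) so that a local isometry onto the connected, simply connected~$\hypb$ is a covering. In the general Poincar\'e theorem this is exactly the role of the completeness condition~(iii) on tangency vertices; here it holds automatically because~$D$ is an honest Dirichlet domain and~$\hypb/\Gamma$ is complete, but you should say so explicitly rather than fold it into ``a finite local combinatorial check at vertices.''
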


\begin{rmk}
 In the presentation given by the theorem we consider only one element for each pair of face-pairing transformation~$g,g^{-1}$. If we take both in the set of generators, we have to add the ``inverse'' relation~$g\, g^{-1} = 1$.
\end{rmk}

\par We are now looking for sufficient conditions for an exterior domain to be a fundamental domain. There is another necessary condition, coming from cycles of some special points at infinity. A point~$z\in\partial\hypb$ is a \emph{tangency vertex} if it is a point of tangency~$z=f\cap f'$ of two faces~$f\subset\I(g),f'\subset\I(g')$ of~$D_0(\Gamma)$. If~$z_1=\I(g_0)\cap\I(g_1)$ is a tangency vertex, then we define a sequence by letting~$z_{i+1}=g_i\act z_i=\I(g_i^{-1})\cap \I(g_{i+1})$ while~$z_{i+1}$ is a tangency vertex (otherwise the sequence ends at~$z_i$). If such a sequence~$(z_i)$ is infinite and~$D_0(\Gamma)$ has finitely many faces, then it is periodic. Let~$m$ be its period; then~$(z_1,\dots,z_m)$ is a \emph{tangency vertex cycle} and the \emph{tangency vertex transformation} is~$h=g_mg_{m-1}\dots g_1$. The fact that~$\hypb / \Gamma$ is complete implies the property:
\begin{enumerate}[(i)]\setcounter{enumi}{2}
  \item The tangency vertex transformation is parabolic.
\end{enumerate}

\medskip
Actually all these definitions can make sense for any exterior domain. Suppose~$\Ext(S)$ is an exterior domain with~$S\subset\Gamma$ a finite minimal defining set. We say that it has a face pairing if~$S=S^{-1}$ and for every~$g\in S$ the image by~$g$ of the face contained in~$\I(g)$ is the face contained in~$\I(g^{-1})$ -- equivalently, the image of every edge of~$\overline{\Ext(S)}$ by the pairing transformation of an adjacent face is an edge of~$\overline{\Ext(S)}$. This implies that every cycle is well-defined. We say that it satisfies the~\emph{cycle condition} if every cycle satisfies the properties (i) and~(ii), and that it is~\emph{complete} if every tangency vertex cycle satisfies the property~(iii).

\begin{thm}[Poincar\'e]\label{poincthm}Let $D=\Ext(S)$ be an exterior domain with~$S$ finite. Suppose~$D$ has a face pairing, satisfies the cycle condition, and is complete. Let $\Gamma'$ be the group generated by the face pairing transformations. Then $D$ is a fundamental polyhedron for $\Gamma'$.
\end{thm}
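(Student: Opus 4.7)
The plan is to verify Poincaré's polyhedron theorem by building an abstract hyperbolic manifold from copies of $\overline{D}$ glued according to the face pairing, and then showing that the natural developing map into $\hyp^3$ is a global isometry. Concretely, I would form the space $\tilde X = \bigl(\Gamma' \times \overline{D}\bigr)/\sim$, where the equivalence relation identifies $(g,x)$ with $(gs, s^{-1}x)$ whenever $s \in S$ and $x$ lies on the face of $\overline{D}$ contained in $\I(s)$. Equip $\tilde X$ with the path-metric induced from the hyperbolic metric on each copy of $\overline{D}$. There is an obvious projection $\pi\colon \tilde X \to \hyp^3$ sending $[g,x]$ to $g\act x$, and a left action of $\Gamma'$ on $\tilde X$ with quotient $\overline{D}$. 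The goal is to show that $\pi$ is an isometry; once this is done, it is immediate from the definition of $\Ext(S)$ that no $\Gamma'$-translate of $D$ meets $D$, so $D$ is a fundamental polyhedron.

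The first substantive step is to check that $\tilde X$ is a hyperbolic manifold, i.e.\ that every point has a neighborhood isometric to an open ball in $\hyp^3$. There are three cases to analyze. At an interior point of some copy of $\overline{D}$, or at a relative-interior point of a face, the face-pairing hypothesis gives an explicit identification with an open half-ball. At an edge point, a small neighborhood in $\tilde X$ is assembled from the dihedral wedges coming from the translates that occur along the cycle at that edge; the cycle condition (i) ensures these wedges close up properly around the edge, and the angle condition (ii) ensures that their dihedral angles sum to exactly $2\pi/\nu$ so that, after quotienting by the cycle-transformation, the total angle is $2\pi$ and the neighborhood is genuinely a ball. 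This edge analysis is the core combinatorial-geometric computation.

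Next I would show that $\tilde X$ is a complete metric space. Points in the interior of copies of $\overline{D}$ cause no trouble; the possible failures of completeness come from Cauchy sequences escaping through tangency points at infinity, and exactly the same bookkeeping along tangency-vertex cycles that produced the edge argument now produces a horoball neighborhood of each such ideal point. The completeness assumption (iii), which makes the relevant cycle transformation parabolic, is what guarantees that these horoball neighborhoods glue up to give an honest neighborhood of a missing ideal cusp point — equivalently, that no Cauchy sequence in $\tilde X$ fails to converge. With these two ingredients, $\pi$ is a local isometry between connected hyperbolic manifolds and $\tilde X$ is complete, so by the standard Hopf–Rinow/covering argument $\pi$ is a covering map. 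Since $\hyp^3$ is simply connected, $\pi$ is a homeomorphism, hence an isometry.

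Finally, transporting the $\Gamma'$-action across $\pi$ shows that the translates $\{g\act \overline{D}\}_{g\in\Gamma'}$ tile $\hyp^3$ without overlap in interiors, and that $D$ satisfies the three conditions (i)–(iii) defining a fundamental domain. The hardest step will be the edge-cycle local model: one must verify carefully that the cycle at an edge closes up consistently (so the sequence of adjacent wedges really forms a disk), that the dihedral angles are compatible with the order $\nu$ of the cycle transformation, and that this local picture extends across the one-skeleton to give a bona fide manifold chart. The completeness step for ideal tangency vertices is a close cousin of this argument, parabolic instead of elliptic, and it is the other place where one must be careful; the remainder of the proof is formal once these local models are in place.
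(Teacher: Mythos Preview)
The paper does not actually prove this theorem: its entire proof reads ``Both theorems are a special case of the second Theorem in \cite{maskit1971poincar}.'' Your outline is essentially the classical argument (and close to what Maskit does): build the identification space from $\Gamma'$-labelled copies of $\overline{D}$, verify it is a hyperbolic manifold, prove it is complete, and conclude via covering-space theory over the simply connected target $\hyp^3$. So you are supplying a genuine proof where the paper only gives a citation.

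Two points in your sketch deserve tightening. First, the edge analysis is slightly misphrased: in $\tilde X$ there is no ``quotienting by the cycle transformation.'' Rather, the wedges around an edge in $\tilde X$ are indexed by the partial products $g_i\cdots g_1$ and then by powers of the cycle transformation $h$; since $h$ has order $\nu$, one traverses the length-$m$ cycle exactly $\nu$ times before returning to the starting copy, and the total angle is $\nu\cdot\alpha(C)=\nu\cdot(2\pi/\nu)=2\pi$. The quotient $\tilde X/\Gamma'$ is an orbifold with cone angle $2\pi/\nu$ at elliptic edges, not a manifold, so keep the argument in $\tilde X$. Second, in dimension three you must also treat the finite vertices (the $0$-cells) of $\overline{D}$, which you do not mention. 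Once the manifold structure is established on the complement of the vertex set, the link of each finite vertex is a closed surface that the developing map sends locally isometrically to a round $2$-sphere, hence is itself a sphere, and the manifold structure extends across the vertex. These are refinements rather than genuine gaps; the overall strategy is correct and standard.
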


\begin{proof}
 Both theorems are a special case of the second Theorem in \cite{maskit1971poincar}.
\end{proof}

\subsection{Quaternion algebras and arithmetic Kleinian groups}
We can now describe the construction of arithmetic Kleinian groups using orders in quaternion algebras. The reader can find more about quaternion algebra in \cite{mfv}. A \emph{quaternion algebra}~$B$ over a field~$F$ is a central simple algebra of dimension~$4$ over~$F$. Equivalently, if~$\charac F\neq 2$, there exists~$a,b\in F^\times$ such that~$B=F+Fi+Fj+Fij$ with multiplication table given by~$i^2=a,\ j^2=b,\ ji=-ij$. Such an algebra is written~$B=\quatalg{a,b}{F}$. A quaternion algebra either is isomorphic to the matrix ring~$\mat_2(F)$, or is a division algebra. Given an element~$w=x+yi+zj+tij\in \quatalg{a,b}{F}$, we define its~\emph{conjugate}~$\bar{w}=x-yi-zj-tij$, its~\emph{reduced trace}~$\trd(w)=w+\bar{w}=2x\in F$ and its~\emph{reduced norm}~$\nrd(w)=w\bar{w}=x^2-ay^2-bz^2+abt^2\in F$.
\par Let~$F$ be a number field, let $\Z_F$ be its ring of integers and let~$B$ be a quaternion algebra over~$F$. An~\emph{order}~$\order\subset B$ is a finitely generated $\Z_F$-submodule with~$F\order = B$ that is also a subring. We write~$\order^1\subset\order^\times$ the subgroup of elements of reduced norm~$1$.
\par A place~$v$ of~$F$ is~\emph{split} or~\emph{ramified} depending on whether~$B\otimes_F F_v$ is isomorphic to the matrix ring or not. The set of ramified places is finite and the~\emph{discriminant} of~$B$ is the product of the ramified finite places, viewed as an ideal in~$\Z_F$. The number field~$F$ is \emph{almost totally real} (or \emph{ATR}) if it has exactly one complex place. A quaternion algebra over an ATR field is \emph{Kleinian} if it is ramified at every real place.

\begin{thm}
\par\noindent Let~$F$ be an ATR number field of degree~$n$, $B$ a Kleinian quaternion algebra over~$F$ and~$\order$ be an order in~$B$. Let~$\rho:B\hookrightarrow\mat_2(\C)$ be an algebra homomorphism extending a complex embedding of~$F$. Then the group~$\Gamma(\order) = \rho(\order^1)/\{\pm 1\}\subset\PSL_2(\C)$ is a Kleinian group. It has finite covolume, and it is cocompact if and only if~$B$ is a division algebra. Furthermore, if~$\order$ is maximal, we have
\begin{equation}\label{covolumeformula}\covol(\Gamma(\order)) = \dfrac{|\Delta_F|^{3/2}\zeta_{F}(2)\Phi(\Delta_B)}{(4\pi^2)^{n-1}}\end{equation}
where $\Delta_F$ is the discriminant of $F$, $\zeta_F$ is the Dedekind zeta function of $F$,~$\Delta_B$ is the discriminant of $B$ and $\Phi(\mathfrak{N})=N(\mathfrak{N})\cdot\prod_{\prm|\mathfrak{N}}\left(1-N(\prm)^{-1}\right)$ for every ideal~$\mathfrak{N}$ of~$F$.
\end{thm}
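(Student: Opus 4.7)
The plan is to establish the four assertions (discreteness, finite covolume, the cocompactness criterion, and the volume formula) in order, leaning on standard structural facts about quaternion algebras over number fields. The starting point is the real decomposition
\[ B\otimes_\Q\R \;\cong\; \mat_2(\C)\times\Hamil^{n-1}, \]
which follows because $F$ is ATR of degree $n$ (one complex, $n-1$ real places) and $B$ is Kleinian (all real places are ramified, contributing copies of the Hamilton quaternions). Under the resulting embedding $\order^1\hookrightarrow \SL_2(\C)\times(\Hamil^1)^{n-1}$, the image is discrete since $\order$ is a discrete $\Z$-lattice in $B\otimes_\Q\R$. Because $\Hamil^1\cong\SU_2(\C)$ is compact, projecting to the first factor preserves discreteness, so $\rho(\order^1)\subset\SL_2(\C)$ is discrete and $\Gamma(\order)=\rho(\order^1)/\{\pm1\}$ is a Kleinian group.

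For finite covolume I would appeal to the Borel--Harish-Chandra theorem applied to the algebraic $\Q$-group $\mathbf{G}=\mathrm{Res}_{F/\Q}(B^1)$, where $B^1=\{x\in B:\nrd(x)=1\}$: the group of integral points $\order^1$ is a lattice in $\mathbf{G}(\R)\cong\SL_2(\C)\times(\Hamil^1)^{n-1}$. Quotienting by the compact factor $(\Hamil^1)^{n-1}$ transfers the finite covolume property to $\rho(\order^1)\subset\SL_2(\C)$, hence to $\Gamma(\order)$ acting on $\hyp^3\cong\SL_2(\C)/\SU_2(\C)$.

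The cocompactness criterion I would handle via the Godement compactness criterion: an arithmetic quotient $\mathbf{G}(\Q)\backslash\mathbf{G}(\R)$ is compact iff $\mathbf{G}$ is $\Q$-anisotropic, i.e.\ contains no nontrivial unipotent rational element. If $B$ is a division algebra, then $B^1$ contains no unipotents (any unipotent would have $\trd=2$ with $\nrd=1$, forcing it to be a nontrivial zero divisor $x-1$ in $B$), so $\mathbf{G}$ is anisotropic and $\Gamma(\order)$ is cocompact. Conversely, if $B\cong\mat_2(F)$, the matrix $\sm{1&1\\0&1}$ produces a parabolic element of $\rho(\order^1)$, yielding a cusp and hence noncompactness.

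The main obstacle is the explicit covolume formula~\eqref{covolumeformula}: this is the hard quantitative result, and I would not reproduce its derivation. The proof goes through adelic volume computations, using that the Tamagawa number $\tau(B^1)=1$ combined with explicit local volumes of maximal orders $\order_v$ at every finite place (yielding the factor $\Phi(\Delta_B)$ from the ramified primes), the local factor $\zeta_F(2)$ at the split places, and the archimedean normalization giving the $|\Delta_F|^{3/2}/(4\pi^2)^{n-1}$ prefactor. For these computations I would simply cite the treatment in \cite{mac} (the formula appears there as a specialization of Borel's general volume formula for arithmetic groups coming from quaternion algebras), which is the reference given at the start of the section.
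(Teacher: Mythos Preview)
The paper does not actually prove this theorem: its entire proof consists of the single sentence ``This theorem can be found in~\cite[Theorems 8.2.2, 8.2.3 and 11.1.3]{mac}.'' So your proposal goes well beyond what the paper offers, and your sketch (discreteness via the archimedean decomposition and compactness of $\Hamil^1$, finite covolume via Borel--Harish-Chandra, cocompactness via Godement, and citing the literature for the Tamagawa/Borel volume computation) is precisely the standard route taken in the reference the paper cites.

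One correction: an ATR field of degree~$n$ has one complex place and $n-2$ real places (since $r_1+2r_2=n$ with $r_2=1$), not $n-1$ real places. Consequently the decomposition is
\[
B\otimes_\Q\R \;\cong\; \mat_2(\C)\times\Hamil^{\,n-2},
\]
as the paper itself uses later when defining the quadratic form~$Q$. This does not affect the logic of your argument, only the exponent.
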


\begin{proof}
This theorem can be found in~\cite[Theorems 8.2.2, 8.2.3 and 11.1.3]{mac}. 
\end{proof}
 An \emph{arithmetic Kleinian group} is a Kleinian group that is commensurable with a group~$\Gamma(\order)$ as in the previous theorem. This is equivalent to the definition given in the introduction. The object of the next section is to describe an algorithm that, given such a group, computes a fundamental domain for~$\Gamma(\order)$, and a presentation with a computable isomorphism.

\section{Algorithms}
We describe every algorithm in ideal arithmetic. In section~\ref{float-impl}, we explain how to implement these algorithms using floating-point arithmetic.
\subsection{Algorithms for polyhedra in the hyperbolic $3$-space}
We start with low-level algorithms for dealing with hyperbolic polyhedra. A point in~$\hypb$ is represented by a vector in~$\C+\R j$; a geodesic plane not containing~$0$ is represented by the Euclidean center and radius of the corresponding Euclidean sphere; a geodesic not containing~$0$ is represented by the Euclidean center and radius of a Euclidean sphere and a basis of a Euclidean plane containing the center of the sphere, such that the geodesic is the intersection of~$\hypb$, this sphere and this plane.
\par Using these representations, it is an exercise in computational geometry to see that we can compute the faces, edges and vertices of a convex polyhedron given by a finite set of half-spaces containing~$0$. The details can be found in~\cite[section II.3.3]{monmemoire}. A harder task is to compute the volume of such a polyhedron. We describe an algorithm here; it is essentially the same as the one described in \cite[section 1.7]{mac} but for the sake of completeness we provide all the details here.
\par Algorithm \ref{algovol} computes the volume of a convex polyhedron with finitely many faces.

\begin{algorithm}[H]
\caption{Volume of a convex polyhedron}
\label{algovol}
\begin{algorithmic}[1]
	\REQUIRE A convex polyhedron $P$ with finitely many faces
	\ENSURE The hyperbolic volume of~$P$
	\STATE \label{triangles} Split every face of $P$ into triangles
	\STATE \label{steptetra} Split $P$ into tetrahedra
	\STATE Using the map~$\eta^{-1}$, send every tetrahedron back to~$\hyp^3$
	\STATE \label{infvertex}Express every tetrahedron as a difference of two tetrahedra, each having a vertex in the sphere at infinity
	\STATE For every tetrahedron having a vertex in the sphere at infinity, apply an isometry to map it to a tetrahedron with one vertex at $\infty$ and the other vertices on the unit hemisphere
	\STATE\label{jvertex} Express every such tetrahedron as a sum and difference of tetrahedra of the same type having one vertex at $j$
	\STATE Express every such tetrahedron as a sum and difference of tetrahedra of the same type with projected Euclidean triangle having a right angle not at~$0$
	\STATE\label{stepangles} For every such tetrahedron, compute the angles $\alpha$ and $\gamma$ and use Proposition~\ref{propvol} to compute the volume~\label{steploba}
	\STATE $\vol(P) \leftarrow$ sum of every contribution
	\RETURN $\vol(P)$
\end{algorithmic}
\end{algorithm}

\begin{figure}[ht]
\centering
\includegraphics*[width=6cm,keepaspectratio=true]{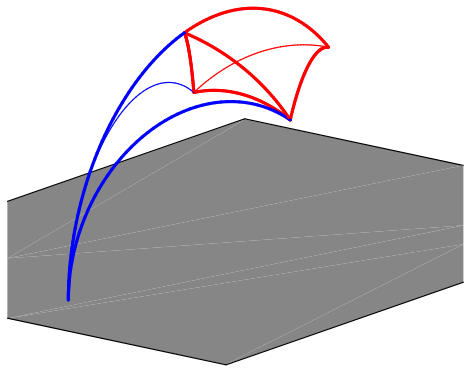}
\caption{Step \ref{infvertex} in Algorithm \ref{algovol}}\label{voltet}
\end{figure}

\begin{rmks}~
\begin{itemize}
	\item For step \ref{triangles}, choose a vertex of the face and link it to every other vertex;
	\item For step \ref{steptetra}, choose a vertex of $P$ and link it to every computed triangle;
	\item For step \ref{infvertex}, choose an edge and consider a geodesic ray containing it, then the tetrahedron appears as the difference between two tetrahedra, each having the geodesic ray as an edge and a face of the initial tetrahedron as a base (see Figure~\ref{voltet});
	\item In step \ref{jvertex}, the signs that appear in the sum are the signs of certain determinants;
	\item In step \ref{stepangles}, the angle~$\alpha$ is an angle in a Euclidean triangle and can be computed by elementary trigonometry, and since the upper half-space model is conformal, the angle~$\gamma$ is the Euclidean angle of intersection of the sphere and plane representing the faces of the tetrahedron. 
\end{itemize}
\end{rmks}

The values of the Lobachevsky function are computed with the following lemma. It may be well-known, but we include it for the sake of completeness.
\begin{lem}
 For all~$\theta\in(-\pi,\pi)$ we have the formula
\[\Loba(\theta)=
  \pi\ln\left(\frac{\pi-\theta}{\pi+\theta}\right) + 
  \theta\left(3-\ln\left[2|\theta|\left(1-\Bigl(\frac{\theta}{\pi}\Bigr)^2\right)\right] + 
  \sum_{n=1}^\infty\frac{\zeta(2n)-1}{n(2n+1)}\Bigl(\frac{\theta}{\pi}\Bigr)^{2n}\right)\]
and the bounds
\begin{eqnarray*}
  \sum_{n>r}\frac{\zeta(2n)}{n(2n+1)}\Bigl(\frac{\theta}{\pi}\Bigr)^{2n}	&\le \dfrac{2}{3}\dfrac{1}{1-\Bigl(\dfrac{\theta}{\pi}\Bigr)^{2}}\Bigl(\dfrac{\theta}{\pi}\Bigr)^{2r+2}\\
  \sum_{n>r}\frac{\zeta(2n)-1}{n(2n+1)}\Bigl(\frac{\theta}{\pi}\Bigr)^{2n}	&\le \dfrac{1}{1-\Bigl(\dfrac{\theta}{2\pi}\Bigr)^{2}}\Bigl(\dfrac{\theta}{2\pi}\Bigr)^{2r+2} \cdot
\end{eqnarray*}
\end{lem}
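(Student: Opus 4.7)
I would begin from the power series
\[\Loba(\theta)=\theta\left(1-\ln(2|\theta|)+\sum_{n=1}^\infty\frac{\zeta(2n)}{n(2n+1)}x^{2n}\right),\qquad x:=\theta/\pi,\]
already recorded in the excerpt, and isolate the part summable in closed form by writing $\zeta(2n)=1+(\zeta(2n)-1)$ inside the sum. Assume $\theta\ne 0$ throughout; the case $\theta=0$ will follow by continuity since both sides vanish there.

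The closed-form part is $S(x):=\sum_{n\ge 1}\frac{x^{2n}}{n(2n+1)}$, which I would evaluate via the partial fractions
\[\frac{1}{n(2n+1)}=\frac{1}{n}-\frac{2}{2n+1}\]
combined with the standard identities $\sum_{n\ge 1}\frac{x^{2n}}{n}=-\ln(1-x^2)$ and $\sum_{n\ge 1}\frac{x^{2n+1}}{2n+1}=\tfrac12\ln\tfrac{1+x}{1-x}-x$, yielding
\[S(x)=-\ln(1-x^2)-\frac{1}{x}\ln\frac{1+x}{1-x}+2.\]
Multiplying through by $\theta$ and using $\theta/x=\pi$, the contribution $\theta S(x)$ becomes $-\theta\ln(1-x^2)+\pi\ln\frac{\pi-\theta}{\pi+\theta}+2\theta$. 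Merging $-\theta\ln(2|\theta|)-\theta\ln(1-x^2)=-\theta\ln[2|\theta|(1-x^2)]$ and combining $\theta+2\theta=3\theta$ produces exactly the stated identity, with the remaining $\zeta(2n)-1$ series untouched.

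For the two tail estimates, both follow from uniform bounds on the coefficients and a geometric sum. Since $\zeta$ is decreasing on $(1,\infty)$, $\zeta(2n)\le\zeta(2)=\pi^2/6\le 2$, and $n(2n+1)\ge 3$ for $n\ge 1$, hence $\frac{\zeta(2n)}{n(2n+1)}\le\frac23$; then $\sum_{n>r}x^{2n}=\frac{x^{2r+2}}{1-x^2}$ gives the first bound. For the second, I would use the integral comparison
\[\zeta(2n)-1=\sum_{k\ge 2}k^{-2n}\le 2^{-2n}+\int_2^\infty u^{-2n}\diff u=2^{-2n}\cdot\frac{2n+1}{2n-1},\]
which gives $\frac{\zeta(2n)-1}{n(2n+1)}\le\frac{2^{-2n}}{n(2n-1)}\le 2^{-2n}$ for $n\ge 1$, and summing $\sum_{n>r}(x/2)^{2n}$ geometrically with $x/2=\theta/(2\pi)$ yields the second bound. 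No step is really an obstacle; the only mildly non-mechanical move is recognizing the closed form of $S(x)$ via partial fractions, and the rest is bookkeeping.
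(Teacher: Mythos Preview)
Your proof is correct and follows essentially the same route as the paper: split $\zeta(2n)=1+(\zeta(2n)-1)$, sum the first piece in closed form, and bound the tails via $\zeta(2n)\le 2$, $\zeta(2n)-1\le C\cdot 2^{-2n}$ together with $\frac{1}{n(2n+1)}\le\frac13$ and a geometric sum. The only cosmetic differences are that the paper evaluates $\sum_{n\ge 1}\frac{x^{2n+1}}{n(2n+1)}$ by differentiating twice and integrating back (rather than your partial-fraction split), and obtains $\zeta(2n)\le 2$ by the same integral comparison it uses for $\zeta(2n)-1$ (rather than your monotonicity argument $\zeta(2n)\le\zeta(2)<2$); your cancellation of the factor $2n+1$ in the second tail bound is marginally slicker than the paper's two separate estimates, but the outcome is identical.
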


\begin{proof}
 To derive the first expression we use the previous power series expansion and extract the first term of the series expansion of the zeta function. For all~$\theta\in(-\pi,\pi)$ we have
 
\[
  \sum_{n=1}^\infty\frac{\zeta(2n)}{n(2n+1)}\Bigl(\frac{\theta}{\pi}\Bigr)^{2n} = \sum_{n=1}^\infty\frac{1}{n(2n+1)}\Bigl(\frac{\theta}{\pi}\Bigr)^{2n} + \sum_{n=1}^\infty\frac{\zeta(2n)-1}{n(2n+1)}\Bigl(\frac{\theta}{\pi}\Bigr)^{2n}
\]
since all these series converge. We only need to compute the power series that appears. By taking derivatives twice we find that for all~$x\in(-1,1)$ we have
\[\sum_{n=1}^{\infty}\frac{x^{2n+1}}{n(2n+1)} = 2x - x\ln(1-x^2) + \ln\Bigl(\frac{1-x}{1+x}\Bigr)\text{.}\]
Letting~$x=\frac{\theta}{\pi}$ in this expression gives the first formula.
\par To prove the inequalities we are going to bound the values~$\zeta(2n)$ and~$\zeta(2n)-1$ for~$n\ge 1$. By series-integral comparison we get
\[\sum_{k=r}^{\infty}k^{-2n} \le \frac{(r-1)^{1-2n}}{2n-1}\text{,}\]
giving
\[\zeta(2n) = 1 + \sum_{k=2}^{\infty}k^{-2n} \le 1 + \frac{1}{2n-1} \le 2\]
for the first value, and
\[\zeta(2n)-1 = 2^{-2n} + \sum_{k=3}^{\infty}k^{-2n} \le \Bigl(1+\frac{2}{2n-1}\Bigr)2^{-2n} \le 3\cdot 2^{-2n}\]
for the second one. Using these inequalities and the bound~$\frac{1}{n(2n+1)}\le \frac{1}{3}$, and computing the geometric sum gives the result.
\end{proof}

\begin{rmks}~
  \begin{itemize}
  \item With the same method, for any~$k$ we can obtain a formula with remainder term~$O\bigl((\frac{\theta}{k\pi})^{2r}\bigr)$.
  \item In practise, we precompute the coefficients of the power series we are using. By periodicity and oddness, we can always reduce to the case where~$\theta\in [0,\frac{\pi}{2}]$: if the precision is fixed, we know a priori the maximal number of terms needed to evaluate the Lobachevsky function.
 \end{itemize}
\end{rmks}

\subsection{The reduction algorithm}
When we have a fundamental domain, it is natural to ask for an algorithm that, given a point in the hyperbolic 3-space, computes an equivalent point in the fundamental domain and an element in the group that sends one point to the other.

\begin{defi}Let $S$ be a subset of a Kleinian group~$\Gamma$. A point $z\in\hypb$ is~$S$-\emph{reduced} if for all $g\in S$, we have $\dist(z,0)\le\dist(g z,0)$, i.e. if $z\in \overline{\Ext(S)}$.
\end{defi}

\begin{algorithm}[H]
\caption{Reduction algorithm}
\label{redalg}
\begin{algorithmic}[1]
	\REQUIRE A point~$w\in\hypb$, a finite ordered subset~$S\subset\PSL_2(\C)$
	\ENSURE A point~$w'$ and an element~$\delta\in\gp{S}$ s.t. $w'$ is $S$-reduced and $w'=\delta w$
	\STATE $w'\leftarrow w$, $\delta \leftarrow 1$
	\STATE $g\leftarrow 1$
	\REPEAT
		\STATE\label{stepinvar} $w'\leftarrow g w'$, $\delta \leftarrow g\delta$
		\STATE\label{stepchoose} $g\leftarrow$ the first $g\in S$ such that $\dist(g w',0)$ is minimal
	\UNTIL{$\dist(g w',0) \ge \dist(w',0)$}\label{stepcmp}
	\RETURN $w',\delta$
\end{algorithmic}
\end{algorithm}

\begin{prop}
 Given $S$ a finite subset of a Kleinian group~$\Gamma$ and a point $w\in\hypb$, Algorithm \ref{redalg} returns a point $w'$ and $\delta\in\gp{S}$ such that $w'$ is $S$-reduced and $w'=\delta w$.
\end{prop}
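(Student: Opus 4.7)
The plan has two parts: first establish the output conditions assuming the loop terminates, then prove that it does terminate.

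For correctness, I would argue by induction on iterations of the loop that the identity $w' = \delta w$ is a loop invariant. It holds trivially at initialization, and each execution of step~\ref{stepinvar} replaces $(w',\delta)$ by $(g w', g \delta)$, which preserves it. The same induction gives $\delta \in \gp{S}$, since every $g$ chosen in step~\ref{stepchoose} lies in $S$. When the loop exits, the current $g$ is a minimizer over $S$ of $\dist(g w', 0)$, so the exit test $\dist(g w', 0) \geq \dist(w', 0)$ immediately yields $\dist(g' w', 0) \geq \dist(w', 0)$ for every $g' \in S$, i.e.\ the returned $w'$ is $S$-reduced.

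For termination, the key observation is that the values of $\dist(w', 0)$ taken at the top of the loop (just after step~\ref{stepinvar}) form a strictly decreasing sequence as long as the loop continues. Indeed, if the exit condition fails at some iteration, then $\dist(g w', 0) < \dist(w', 0)$ for the currently chosen $g$, and the next execution of step~\ref{stepinvar} replaces $w'$ by $g w'$; hence the new $\dist(w', 0)$ is strictly smaller. (The first pass, with $g = 1$, is a no-op and either exits immediately or is absorbed into this strict-decrease argument from the second pass on.) By the invariant, every intermediate value $w'$ lies in the orbit $\gp{S}\cdot w \subset \Gamma \cdot w$ and, by the monotonicity above, inside the closed hyperbolic ball of radius $\dist(w, 0)$ around~$0$. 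Since $\Gamma$ is a Kleinian group, it acts properly discontinuously on~$\hypb$, so the intersection $\Gamma\cdot w \cap \overline{\ball(0,\dist(w,0))}$ is finite. A strictly decreasing sequence in a finite set is finite, hence the loop halts.

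The argument is essentially routine once these two ingredients are isolated; no delicate geometric step is needed. The one conceptual point to emphasize, which is the only mild subtlety, is that termination relies on proper discontinuity of the ambient Kleinian group $\Gamma$, not merely of the subgroup $\gp{S}$ generated by $S$: it is this discreteness that forces the orbit to meet the bounded region explored by the algorithm in only finitely many points.
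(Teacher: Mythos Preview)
Your proof is correct and follows essentially the same approach as the paper's: the loop invariant $w'=\delta w$, $\delta\in\gp{S}$ gives correctness, and strict decrease of $\dist(w',0)$ along a discrete orbit in a bounded ball gives termination. One minor remark on your closing ``subtlety'': it is not actually needed, since any subgroup of a discrete group is itself discrete, so $\gp{S}$ already acts properly discontinuously and its orbit meets the closed ball in finitely many points; the paper, like you, phrases termination via the $\Gamma$-orbit, but either choice works.
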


\begin{proof}
 After step \ref{stepinvar}, we have $w'=\delta w$ and $\delta\in\gp{S}$. Because of the loop condition, while the algorithm runs the distance~$\dist(w',0)$ decreases. Since $w'$ stays in the orbit of $w'$ under~$\Gamma$ and this orbit is discrete, the algorithm terminates. When it happens, $g$ is an element in~$S$ such that~$\dist(g w',0)$ is minimal and~$\dist(g w',0) \ge \dist(w',0)$, so $w'$ is $S$-reduced.
\end{proof}

\begin{rmk}
	At step \ref{stepchoose}, the $g$ achieving the minimal $\dist(g w',0)$ may not be unique. We can pick any of these elements. Ordering~$S$ gives us a canonical choice.
\end{rmk}

Reducing points can give interesting information about the elements of the group, because if $w$ has a trivial stabilizer, then the orbit map~$\gamma\mapsto \gamma\act w$ is a bijection. This is the reason for introducing the following definition:
\begin{defi}Let $S$ be a subset of a Kleinian group~$\Gamma$ and $w\in\hypb$. An element~$\gamma\in\PSL_2(\C)$ is $(S,w)$-\emph{reduced} if $\gamma w$ is $S$-reduced, i.e. if $\gamma w\in\overline{\Ext(S)}$.
\end{defi}

Given a finite~$S$, $w$ and $\gamma$, we can now compute an $(S,w)$-reduced element $\bar\gamma$ such that $\bar\gamma\equiv \gamma \pmod{S}$ as follows: we reduce $\gamma w$ with respect to $S$; if $\delta\in\gp{S}$ is such that~$\delta (\gamma w)$ is $S$-reduced, then $\bar\gamma = \delta\gamma$ is $(S,w)$-reduced. We also write the reduced element~$\bar\gamma = \reduc_S(\gamma;w)$ and simply~$\reduc_S(\gamma)=\reduc_S(\gamma;0)$. A~priori this reduced element could depend on the chosen ordering in Algorithm~\ref{redalg}.

\begin{prop}\label{propuniquered} Suppose that $\Ext(S)$ is a fundamental domain for $\gp{S}$. Then for $w\in\hypb$ outside of a zero measure, closed subset of~$\hypb$, the following holds: for every~$\gamma\in\Gamma$, there exists a unique $(S,w)$-reduced $\bar\gamma\equiv\gamma\pmod{S}$. If $w\in\Ext(S)$ then $\bar\gamma=1$ if and only if~$\gamma\in\gp{S}$.
\end{prop}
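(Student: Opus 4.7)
My plan is to exhibit a single zero-measure closed set $X\subset\hypb$ off of which both existence and uniqueness of $\bar\gamma$ follow cleanly from the defining properties (i)--(iii) of a fundamental domain, and then to read off the last assertion as a corollary of uniqueness.

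First I would set $Z := \bigcup_{\delta\in\gp{S}}\delta\,\partial\Ext(S)$. By property~(iii) and the countability of the discrete group $\gp{S}$, the set $Z$ has measure zero; local finiteness of the tessellation $\{\delta\overline{\Ext(S)}\}_{\delta\in\gp{S}}$ makes $Z$ closed in~$\hypb$. The bad set I want is then $X := \bigcup_{\gamma\in\Gamma}\gamma^{-1}Z$, still of measure zero because $\Gamma$ is countable. Closedness of~$X$ is the delicate step; it will follow from a local-finiteness argument using proper discontinuity of~$\Gamma$.

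Next, for $w\in\hypb\setminus X$ and $\gamma\in\Gamma$, property~(i) gives some $\delta\in\gp{S}$ with $\delta\gamma w\in\overline{\Ext(S)}$. Since $\gamma w\notin Z$, no $\gp{S}$-translate of $\gamma w$ lies on $\partial\Ext(S)$, so $\delta\gamma w$ actually lies in the open set $\Ext(S)$. This produces an $(S,w)$-reduced element $\bar\gamma := \delta\gamma$ with $\bar\gamma\equiv\gamma\pmod S$. For uniqueness, if $\bar\gamma_1$ and $\bar\gamma_2 = \eta\bar\gamma_1$ are two such representatives (with $\eta\in\gp{S}$), the same argument forces both $\bar\gamma_i w$ into $\Ext(S)$, and property~(ii) then rules out $\eta\ne 1$.

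The last assertion follows immediately from uniqueness: if $w\in\Ext(S)$, then the identity is itself $(S,w)$-reduced, so for any $\gamma\in\gp{S}$ it represents the same coset as $\gamma$ and uniqueness forces $\bar\gamma=1$; conversely, $\bar\gamma=1$ together with $\bar\gamma\equiv\gamma\pmod S$ forces $\gamma\in\gp{S}$. The main technical obstacle I anticipate is verifying closedness of~$X$, since the translated family may involve non-compact pieces; the combinatorial heart of the argument is otherwise a clean application of the three fundamental domain axioms.
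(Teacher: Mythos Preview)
Your argument follows the same skeleton as the paper's: use the covering property to produce a reduced representative, use disjointness of translates to force uniqueness once the reduced point lies in the \emph{open} domain, and read off the last assertion from uniqueness. The difference is in the choice of bad set. The paper simply takes the complement of $\Gamma\cdot\Ext(S)$: this is automatically closed (complement of an open set) and is contained in your $Z$, hence of measure zero, so the paper never faces your ``delicate step'' at all. With that choice the paper argues that $w\in\Gamma\cdot\Ext(S)$ and $\bar\gamma w\in\overline{\Ext(S)}$ force $\bar\gamma w\in\Ext(S)$, after which property~(ii) finishes uniqueness. Your larger set $X=\Gamma\cdot Z$ makes that passage from $\overline{\Ext(S)}$ to $\Ext(S)$ completely transparent, at the cost of having to prove $X$ is closed.

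On that cost: the justification you sketch is not sufficient. Proper discontinuity of $\Gamma$ does \emph{not} imply that the $\Gamma$-saturation of an arbitrary closed set is closed --- the quotient map $\hypb\to\hypb/\Gamma$ is open but in general not closed, and a noncompact closed set like $Z$ can have a non-closed orbit. What does work is the observation you already made, that $Z$ is $\gp{S}$-invariant, so $X=\bigcup_{[\gamma]\in\gp{S}\backslash\Gamma}\gamma^{-1}Z$; when $[\Gamma:\gp{S}]<\infty$ (the only situation actually used later in the paper) this is a \emph{finite} union of closed sets and hence closed. In the infinite-index case your $X$ need not be closed, and indeed its closure can have full measure: the closure of a single totally geodesic plane in a finite-volume hyperbolic $3$-manifold can be the entire manifold. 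So either add the finite-index hypothesis, or adopt the paper's smaller bad set to get closedness for free.
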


\begin{proof}Let $w\in \Gamma\act\Ext(S)$. The existence follows from Algorithm~\ref{redalg}. For uniqueness, suppose $\bar\gamma$ and $\bar\gamma'$ are $(S,w)$-reduced and $\bar\gamma\equiv\bar\gamma'\equiv\gamma\pmod{S}$. Then~$\bar\gamma w,\bar\gamma' w\in\overline{\Ext(S)}$, and since $w$ is in the orbit of $\Ext(S)$, they are in fact in~$\Ext(S)$. Since these two points are in the same $\gp{S}$-orbit, we have $\bar\gamma=\bar\gamma'$. Now assume~$w\in\Ext(S)$. If~$\bar\gamma=1$ then $\gamma\equiv\bar\gamma\equiv 1\pmod{S}$, i.e. $\gamma\in\gp{S}$. If~$\gamma\in\gp{S}$ then $\gamma\equiv 1\pmod{S}$ and~$1$ is $(S,w)$-reduced so by uniqueness $\bar\gamma=1$.
Moreover the complement of~$\Gamma\act\Ext(S)$ in~$\hypb$ is a locally finite union of faces of~$\Ext(S)$, so it is closed with zero measure.
\end{proof}

Since this provides an algorithm to write an element of the group as a word in the generators and to compute modulo $\gp{S}$ with explicit unique representatives, that particular kind of generating set deserves a name.

\begin{defi} A subset $S$ of a Kleinian group $\Gamma$ is a \emph{basis} if $\Ext(S)$ is a fundamental domain for $\gp{S}=\Gamma$. If $S$ is also a minimal defining set for $\Ext(S)$, it is called a \emph{normalized basis} for $\Gamma$.
\end{defi}

\subsection{Normalized basis algorithms}
Now we describe a general algorithm that computes a normalized basis for a cocompact Kleinian group~$\Gamma$. We will then apply it to arithmetic groups. First note that, after conjugating the group by a suitable element in~$\PSL_2(\C)$, we may assume that~$0\in\hypb$ has a trivial stabilizer in~$\Gamma$ and that every elliptic cycle has length~$1$.

\medskip

We will use two blackbox subalgorithms, Enumerate and IsFullGroup:
\begin{itemize}
  \item Enumerate($\Gamma, n$) takes as an input a positive integer~$n$ and returns a finite set of elements in~$\Gamma$ (the integer~$n$ is a parameter for iteration, it does not have any mathematical meaning);
  \item IsFullGroup($\Gamma,S$) takes as an input a finite normalized basis~$S$ for a subgroup~$\gp{S}\subset\Gamma$ and returns \textbf{true} or \textbf{false} according to whether~$\gp{S}=\Gamma$ or not.
\end{itemize}

In every algorithm, an exterior domain~$\Ext(S)$ with finite~$S$ is represented as a polyhedron in~$\hypb$. We begin with a naive algorithm.

\begin{algorithm}[H]
\caption{Naive normalized basis algorithm}
\label{naivebasisalg}
\begin{algorithmic}[1]
	\REQUIRE A Kleinian group~$\Gamma$
	\ENSURE A normalized basis~$S$ for~$\Gamma$
	\STATE $S \leftarrow \emptyset,\ n\leftarrow 0$
	\REPEAT
		\REPEAT
			\STATE $n\leftarrow n+1$
			\STATE add Enumerate$(\Gamma,n)$ to $S$
			\STATE $S\leftarrow$ minimal defining set of~$\Ext(S)$\label{naivestepnb}
		\UNTIL{$\Ext(S)$ has a face-pairing \AND $\Ext(S)$ is complete \AND $\Ext(S)$ satisfies the cycle condition}
	\UNTIL{IsFullGroup($\Gamma,S$)}
	\RETURN $S$
\end{algorithmic}
\end{algorithm}

We say that~Enumerate is a \emph{complete enumeration} of~$\Gamma$ if we have
\[\bigcup_{n>0}\text{Enumerate}(\Gamma,n)=\Gamma\text{.}\]

\begin{prop}\label{propnaive}If~$\Gamma$ is geometrically finite and Enumerate is a complete enumeration of~$\Gamma$, then Algorithm~\ref{naivebasisalg} terminates after a finite number of steps and the output~$S$ is a normalized basis for~$\Gamma$.
\end{prop}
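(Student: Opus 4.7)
The plan is to introduce a target object, namely the set $T$ consisting of the minimal defining set of the Dirichlet domain $D_0(\Gamma)$, and then establish three things: (i) $T$ is already a normalized basis for $\Gamma$; (ii) if the algorithm terminates, its output is a normalized basis; (iii) the working set $S$ eventually becomes $T$, at which point both stopping conditions are met.

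For (i), geometric finiteness forces $\overline{D_0(\Gamma)}$ to have finitely many faces, so $T$ is finite. Uniqueness of the minimal defining set follows from the injectivity of $g\mapsto\I(g)$ recorded earlier. By Theorem~\ref{presthm} and the discussion preceding it, $T$ is indeed a normalized basis for $\Gamma$. For (ii), assume the algorithm returns $S$: step~\ref{naivestepnb} guarantees that $S$ is a minimal defining set for $\Ext(S)$; the inner UNTIL clause together with Poincar\'e's Theorem~\ref{poincthm} ensures that $\Ext(S)$ is a fundamental polyhedron for $\gp{S}$; the outer UNTIL forces $\gp{S}=\Gamma$; hence $S$ is a normalized basis.

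For (iii), I would use completeness of Enumerate to fix an integer $N$ with $T\subseteq\bigcup_{n\leq N}\mathrm{Enumerate}(\Gamma,n)$. After at most $N$ passes through the inner loop we will therefore have $T\subseteq S\subseteq\Gamma\setminus\{1\}$, so
\[
D_0(\Gamma)=\Ext(\Gamma\setminus\{1\})\subseteq\Ext(S)\subseteq\Ext(T)=D_0(\Gamma),
\]
forcing $\Ext(S)=D_0(\Gamma)$. The minimization in step~\ref{naivestepnb} then replaces $S$ by $T$ by uniqueness of the minimal defining set. At that point $\Ext(S)=D_0(\Gamma)$ is a Dirichlet domain for the geometrically finite group $\Gamma$, so it has a face-pairing, satisfies the cycle condition, and is complete (the finite-covolume quotient $\Gamma\backslash\hypb$ forces every tangency vertex transformation to be parabolic). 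Consequently the inner loop exits, and since $\gp{T}=\Gamma$ by Theorem~\ref{presthm}, IsFullGroup returns \textbf{true} and the outer loop exits, returning $T$.

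The step I expect to be the main obstacle is the claim that once $T\subseteq S$ the minimization produces exactly the set $T$: this rests on the injectivity of $g\mapsto\I(g)$ and on ruling out that the identity (or any element fixing $0$) accidentally remains in $S$, which I would handle by appealing to the standing assumption that $0$ has trivial stabilizer and by noting that $1$ does not contribute a face and is therefore removed in the minimization. A secondary subtlety worth spelling out is the completeness of $D_0(\Gamma)$, which uses the finite-covolume hypothesis to exclude hyperbolic tangency vertex transformations in favour of parabolic ones.
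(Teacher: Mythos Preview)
Your argument is correct and follows essentially the same route as the paper's proof: geometric finiteness gives a finite minimal defining set~$T$ for~$D_0(\Gamma)$, completeness of Enumerate eventually places~$T$ inside~$S$, and then the Dirichlet domain satisfies all three conditions so the algorithm halts; correctness of the output is handled by Step~\ref{naivestepnb} together with Theorem~\ref{poincthm}. Your treatment is more explicit than the paper's (in particular you spell out the uniqueness of the minimal defining set and the separate termination/correctness parts), but the strategy is the same.

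One small correction: in part~(iii) you justify completeness of~$D_0(\Gamma)$ by appealing to a ``finite-covolume quotient'', but the proposition only assumes geometric finiteness, which does not imply finite covolume. The paper's reasoning (stated just before Theorem~\ref{poincthm}) is that~$\hypb/\Gamma$ is a complete metric space for any Kleinian group~$\Gamma$, and this is what forces tangency vertex transformations to be parabolic. Replace your parenthetical with that justification and the argument is clean.
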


\begin{proof}The Dirichlet domain centered at~$0$ for~$\Gamma$ has finitely many faces by geometric finiteness. Since Enumerate is a complete enumeration, a defining set for this Dirichlet domain will be enumerated after a finite number of steps. The algorithm will then terminate as all the conditions are satisfied by Dirichlet domains. The output will then be a normalized basis for~$\Gamma$ by Step~\ref{naivestepnb} and Theorem~\ref{poincthm}.
\end{proof}

We will now use the reduction algorithm to improve upon Algorithm~\ref{naivebasisalg}. The main ideas are
\begin{itemize}
	\item reducing the elements that we have to find smaller ones
	\item when the face-pairing condition, the cycle condition or the completeness condition fails, using this fact to find elements that make the exterior domain smaller.
\end{itemize}
For clarity, we divide Algorithm \ref{basisalg} into four routines. Algorithm \ref{basisalg} uses these routines to compute a normalized basis for a geometrically finite Kleinian group~$\Gamma$.

\begin{algorithm}[H]
\caption{Normalized basis algorithm}
\label{basisalg}
\begin{algorithmic}[1]
	\REQUIRE A Kleinian group~$\Gamma$
	\ENSURE A normalized basis~$S$ for~$\Gamma$
	\STATE $S \leftarrow \emptyset,\ n\leftarrow 0$
	\REPEAT
		\REPEAT
			\STATE $n\leftarrow n+1$
			\STATE add Enumerate$(\Gamma,n)$ to $S$
			\STATE $S\leftarrow$ KeepSameGroup($S$)
			\STATE $S\leftarrow$ CheckPairing($S$)
			\STATE $S\leftarrow$ CheckCycleCondition($S$)
			\STATE $S\leftarrow$ CheckComplete($S$)
		\UNTIL{$\Ext(S)$ does not change}
	\UNTIL{IsFullGroup($\Gamma,S$)}
	\RETURN $S$
\end{algorithmic}
\end{algorithm}

\medskip

The first routine, KeepSameGroup, reduces elements as much as possible to eliminate redundant ones and find smaller ones.

\begin{algorithm}[H]
\caption{KeepSameGroup}
\label{ksg}
\begin{algorithmic}[1]
	\REQUIRE A finite subset~$S\subset\PSL_2(\C)$
	\ENSURE A new~$S$ generating the same group with smaller elements
	\REPEAT
		\STATE $U \leftarrow$ minimal defining set of~$\Ext(S)$\label{stepnb}
		\FORALL{$g\in S$}
			\STATE $\bar{g}\leftarrow \reduc_{U}(g)$
			\IF{$\bar{g}\neq \pm 1$}
				\STATE add $\bar{g}$ to $U$
			\ENDIF
		\ENDFOR
		\STATE $S\leftarrow U$
	\UNTIL $\Ext(S)$ does not change
	\RETURN $S$
\end{algorithmic}
\end{algorithm}

\begin{prop}
 If~$S$ is a subset of a Kleinian group, then Algorithm~\ref{ksg} terminates and does not change the group generated by~$S$.
\end{prop}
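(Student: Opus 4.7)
The statement has two parts: invariance of the generated group, and termination.

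For group invariance, each iteration replaces $S$ by $U \cup \{\bar g : g \in S,\ \bar g \neq \pm 1\}$, where $U$ is the minimal defining set of $\Ext(S)$, so $U \subseteq S$; and by the construction of $\reduc_U$ in Algorithm~\ref{redalg}, we have $\bar g = \delta g$ for some $\delta \in \gp{U}$. Hence both $U$ and every $\bar g$ lie in $\gp{S}$, giving $\gp{\text{new } S} \subseteq \gp{\text{old } S}$. Conversely, any $g \in S$ is recovered as $g = \delta^{-1} \bar g \in \gp{U \cup \{\bar g\}}$, so the reverse inclusion holds and the group is preserved.

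For termination, I would show that $\Ext(S)$ can change only finitely many times. The first ingredient is monotonicity: since $\Ext(U) = \Ext(S)$ by the definition of the minimal defining set and the new set contains $U$, we have $\Ext(\text{new } S) \subseteq \Ext(U) = \Ext(S)$. The second ingredient is a finiteness statement coming from discreteness. By the standing assumption of this section (after conjugation), $0 \in \hypb$ has trivial stabilizer in the ambient Kleinian group, so the orbit $\gp{S} \cdot 0$ is a discrete subset of $\hypb$. Set $R = \max\{\dist(g \cdot 0, 0) : g \in S_0\}$ where $S_0$ denotes the input. Then the set $\mathcal{E}_R = \{\gamma \in \gp{S} : \dist(\gamma \cdot 0, 0) \leq R\}$ is finite.

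The remaining key step is the invariant $S_i \subseteq \mathcal{E}_R$, which I prove by induction on the iteration index $i$. The base case is immediate from the definition of $R$. For the inductive step, $U_i \subseteq S_i \subseteq \mathcal{E}_R$ by hypothesis, and every reduction $\bar g = \reduc_{U_i}(g)$ with $g \in S_i$ satisfies $\dist(\bar g \cdot 0, 0) \leq \dist(g \cdot 0, 0) \leq R$, because each step of the loop in Algorithm~\ref{redalg} only decreases the distance to $0$ of the tracked point. Hence $\bar g \in \mathcal{E}_R$ and the invariant propagates. Since $\mathcal{E}_R$ is finite, the subsets $S_i$ — and hence the polyhedra $\Ext(S_i)$ — range over a finite collection; a non-increasing sequence in a finite set must stabilize, at which point the \textbf{UNTIL} condition triggers and the algorithm terminates. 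The main obstacle is thus identifying the correct bounded ball and checking via Algorithm~\ref{redalg} that reductions never leave it; once this is done the rest is a monotonicity-in-a-finite-set argument.
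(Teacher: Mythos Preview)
Your proof is correct. The group-invariance argument is essentially identical to the paper's. For termination, both you and the paper isolate the same key finite set~$\mathcal{E}_R=\{g\in\gp{S}:\dist(g\cdot 0,0)\le R\}$ (the paper calls it~$X_0$, with~$R=M$) and show, via the distance-decreasing property of the reduction loop, that every~$S_i$ remains inside it. The difference is only in how the final termination is extracted: you note that~$\Ext(S_i)$ is non-increasing under inclusion and, since~$S_i$ ranges over subsets of a finite set, the sequence~$\Ext(S_i)$ must stabilize. The paper instead argues at the level of individual elements: once an element~$g$ is dropped from the minimal defining set, the point~$g\cdot 0$ lies outside~$\Ext(S)$, so~$g$ is not~$U$-reduced for any later~$U$ and hence can never be re-added; finiteness of~$X_0$ then forces termination. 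Your monotonicity argument is a little more streamlined; the paper's version buys the extra information that discarded elements never reappear in~$S$.
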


\begin{proof}
 We first prove the second claim. Every element added to~$S$ belongs to the group generated by~$S$ as it is a reduction by~$U\subset S$ of an element in~$S$. Moreover, every element that is discarded has~$\reduc_{U}(g)=\pm 1$ so at the end of the loop we have~$g\in\gp{S}$, and every other element~$g\in S\setminus U$ is replaced by~$\bar{g}=\reduc_{U}(g)\in \gp{U}g$, so the group generated by~$S$ does not change.

\medskip

Now we prove that the algorithm terminates. First consider the initial~$S$. Let~$M=\max\{\dist(g\act 0,0) : g\in S\}$ and~$X_0=\{g\in\gp{S} : \dist(g\act 0,0)\le M\}$. The set~$X_0$ is finite since~$\gp{S}$ is a Kleinian group, and we have~$S\subset X_0$. By definition of reduction, every element added to~$U$ is in~$X_0$. Moreover, by Step~\ref{stepnb} if an element~$g$ is discarded then its isometric sphere~$\I(g)$ does not intersect~$\overline{\Ext(S)}$, so~$g\act 0$ is in the complement of~$\Ext(S)$: $g$ cannot be the reduction of any element, so it cannot be added again. Similarly if~$g\in S\setminus U$ is replaced by~$\bar{g}\neq g$, then~$g$ is not reduced so it cannot be added again. Hence the algorithm terminates.
\end{proof}

The second routine, CheckPairing, checks whether~$\Ext(S)$ has a face-pairing. If it does not, it finds elements that make~$\Ext(S)$ smaller.

\begin{algorithm}[H]
\caption{CheckPairing}
\label{cp}
\begin{algorithmic}[1]
	\REQUIRE A finite subset~$S\subset\PSL_2(\C)$
	\ENSURE A new~$S$ such that~$\Ext(S)$ is smaller if it did not have a face-pairing
	\STATE $S\leftarrow S\cup S^{-1}$
	\FORALL{$e$ edge in $\I(g)$ and $g\in S$, s.t. $ge$ not an edge of~$\overline{\Ext(S)}$}\label{steptestpaired}
		\STATE $x\leftarrow$ $x\in e$ such that $g x\notin \overline{\Ext(S)}$
		\STATE $\bar{g}\leftarrow \reduc_S(g;x)$
		\STATE \label{steppair}add~$\bar{g}, \bar{g}^{-1}$ to~$S$
	\ENDFOR
	\RETURN $S$
\end{algorithmic}
\end{algorithm}

\begin{prop}
 If~$\Ext(S)$ does not have a face-pairing, then after applying Algorithm~\ref{cp}, $\Ext(S)$ is strictly smaller.
\end{prop}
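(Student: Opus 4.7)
The plan is to treat separately the effect of step~1 of Algorithm~\ref{cp} and the effect of the edge-enumeration loop. Write $S_0 = S \cup S^{-1}$ for the set after step~1. If $\Ext(S_0) \subsetneq \Ext(S)$ we are done, as the subsequent steps only add elements to $S$ and thus only shrink $\Ext$. So we may assume $\Ext(S_0) = \Ext(S)$. Since $S_0 = S_0^{-1}$, the failure of the face-pairing condition can now only come from the second clause: there is a face $f$ of $\overline{\Ext(S_0)}$ contained in some $\I(g)$, $g \in S_0$, such that $gf$ is not the face $f'$ of $\overline{\Ext(S_0)}$ contained in $\I(g^{-1})$.

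The key geometric input is that both $gf$ and $f'$ are convex polygons in the single geodesic plane $\I(g^{-1})$. In the case $gf \not\subset f'$, some point of $f$ is mapped outside $\overline{\Ext(S_0)}$. I would argue that not every edge of the polygon $gf$ can be an edge of the polyhedron $\overline{\Ext(S_0)}$: if it were, then by convexity the 2-dimensional region in $\I(g^{-1})$ bounded by those edges would coincide with a face of $\overline{\Ext(S_0)}$, forcing $gf = f'$ and contradicting our assumption. Hence there is an edge $e$ of $f$ whose image $ge$ is not an edge of $\overline{\Ext(S_0)}$; since $ge$ lies in the plane $\I(g^{-1})$ but is not contained in the face $f'$, at least one of its points leaves $\overline{\Ext(S_0)}$, yielding $x \in e$ with $gx \notin \overline{\Ext(S_0)}$. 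In the symmetric case $gf \subsetneq f'$, one has $g^{-1}f' \supsetneq f$, so applying the same argument to $g^{-1}$ acting on $f'$ produces the required witness.

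Given such $e$ and $x \in e$, I invoke Algorithm~\ref{redalg}: because $gx$ is not $S_0$-reduced, each iteration of the reduction strictly decreases the distance to~$0$, so $\bar g = \reduc_{S_0}(g; x)$ satisfies
\[ \dist(0, \bar g x) < \dist(0, gx) = \dist(0, x), \]
where the second equality uses $x \in \I(g)$. Therefore $x \in \Int(\bar g)$, an open set. Since $x$ lies in the boundary face $f \subset \overline{\Ext(S_0)}$, any sufficiently small neighborhood of $x$ meets $\Ext(S_0)$ in a nonempty open set, and this set is contained in $\Int(\bar g)$, hence outside $\overline{\Ext(\bar g)}$. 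Consequently $\Ext(S_0 \cup \{\bar g, \bar g^{-1}\}) \subsetneq \Ext(S_0) = \Ext(S)$, which is what the algorithm returns.

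The main obstacle I anticipate is the convexity argument in the second paragraph: turning the abstract inequality $gf \neq f'$ into a concrete edge-and-point witness requires a careful analysis of how two convex polygons in the same geodesic plane can differ combinatorially, together with the orientation fix (passing to $g^{-1}$) in the case where the image polygon is strictly smaller. The remaining steps are routine consequences of the monotonicity built into Algorithm~\ref{redalg} and the openness of $\Int(\bar g)$.
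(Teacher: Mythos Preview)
Your argument is correct, and the decisive step (your third paragraph) is exactly the paper's proof: from $x\in\I(g)$ and $gx\notin\overline{\Ext(S)}$ one obtains $\dist(\bar g x,0)<\dist(gx,0)=\dist(x,0)$, hence $x\in\Int(\bar g)$ and $\Ext(S\cup\{\bar g\})\subsetneq\Ext(S)$. Your second paragraph supplies a justification the paper omits entirely: having already asserted in the definitions that face-pairing is \emph{equivalent} to ``every edge maps to an edge'', the paper simply takes the existence of a nonpaired edge $e$ and a witness $x\in e$ with $gx\notin\overline{\Ext(S)}$ as given by Steps~2--3 of the algorithm, so your convexity case-split (including the pass to $g^{-1}$ when $gf\subsetneq f'$) is added rigor filling a gap the paper glosses over, not a different route.
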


\begin{proof}
 If there is a nonpaired edge, at Step \ref{steppair}, since $x\in\I(g)$ we have~$\dist(g x,0)=\dist(x,0)$ and since $g x\notin\overline{\Ext(S)}$ we have~$\dist(g x,0)>\dist(\bar g x,0)$. Putting these two together gives $\dist(\bar g x,0)<\dist(x,0)$, i.e. $x\in\Int(\bar g)$ so finally we have~$\Ext(S\cup\{\bar g\})\varsubsetneq \Ext(S)$.
\end{proof}

We give a second possible algorithm for CheckPairing. It is simpler but less efficient in practice. It uses the fact that if a non-elliptic cycle has length three (which is generically the case), then it is of the form~$e\subset\I(g)\cap\I(h)$, $g e\subset\I(g^{-1})\cap\I(gh^{-1})$, $h e\subset\I(hg^{-1})\cap\I(h^{-1})$.

\begin{algorithm}[H]
\caption{CheckPairing'}
\label{cp2}
\begin{algorithmic}[1]
	\REQUIRE A finite subset~$S\subset\PSL_2(\C)$
	\ENSURE A new~$S$ such that~$\Ext(S)$ is smaller if it did not have a face-pairing
	\STATE $S\leftarrow S\cup S^{-1}$
	\FORALL{$g,h\in S$ s.t. $\I(g)\cap\I(h)\neq\emptyset$ and~$h\neq g^{-1}$}
		\STATE add~$gh^{-1}, hg^{-1}$ to~$S$
	\ENDFOR
	\RETURN $S$
\end{algorithmic}
\end{algorithm}

\begin{prop}\label{propcp2} 
If~$\Ext(S)$ does not have a face-pairing, then after applying Algorithm~\ref{cp2}, $\Ext(S)$ is strictly smaller.
\end{prop}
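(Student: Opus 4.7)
The plan is to extract the failure of face-pairing into a concrete pair of intersecting isometric spheres, then locate a newly added isometric sphere that transversely slices a small open set off $\Ext(S)$.

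First I would unpack the hypothesis. After the replacement $S \leftarrow S \cup S^{-1}$, the failure of the face-pairing means there is some $g \in S$ and an edge $e$ of $\overline{\Ext(S)}$ contained in $\I(g)$ such that $ge$ is not an edge of $\overline{\Ext(S)}$. Since $e$ is an edge, some other face-defining element $h \in S$ satisfies $e \subset \I(g)\cap\I(h)$, and necessarily $h \neq g^{-1}$. The core geometric observation is that $ge \subset \I(hg^{-1})$: if $x \in e$ then $x \in \I(g) \cap \I(h)$ gives $\dist(x,0) = \dist(x, g^{-1}\act 0) = \dist(x, h^{-1}\act 0)$, and applying the isometry $g$ yields $\dist(gx,0) = \dist(gx, gh^{-1}\act 0)$, which is exactly the condition $gx \in \I(hg^{-1})$. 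Moreover $\I(hg^{-1}) \neq \I(g^{-1})$ because their ``foci'' $hg^{-1}\act 0$ and $g\act 0$ are distinct (this uses that $0$ has trivial stabilizer, so $h\act 0 \neq 0$), hence at every point of $ge$ the plane $\I(hg^{-1})$ meets $\I(g^{-1})$ transversely.

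Next I pick a generic interior point $x$ of $e$, set $y = gx \in ge \subset \I(g^{-1})$, and split into cases. In the main case, $y$ lies in the relative interior of the face $\I(g^{-1})\cap\overline{\Ext(S)}$; locally $\Ext(S)$ is then an open half-ball bounded by $\I(g^{-1})$, and the transverse plane $\I(hg^{-1})$ through $y$ cuts this half-ball into two pieces, one of which lies in $\Ext(S)\cap\Int(hg^{-1})$ and is non-empty open. Since Algorithm \ref{cp2} explicitly adds $hg^{-1}$ coming from the pair $(g,h)$, the resulting exterior domain is strictly smaller. In the remaining case, $y \notin \overline{\Ext(S)}$, so there exists $k \in S$ with $y \in \Int(k)$; using $\dist(x,0) = \dist(y,0)$ one computes $\dist(kg x, 0) = \dist(ky,0) < \dist(y,0) = \dist(x,0)$, so $x \in \Int(kg)$. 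Assuming $k \neq g$ the pair $(g^{-1}, k)$ is iterated by the algorithm---and $\I(g^{-1})\cap\I(k)$ is non-empty because, by the minimality of $S$ ensured by the preceding call to KeepSameGroup, $\I(g^{-1})$ cannot lie inside $\overline{\Int(k)}$, else it would contribute no face to $\overline{\Ext(S)}$. Thus the algorithm adds $kg$, and a neighborhood of $x$ in $\Ext(S)\cap\Int(kg)$ witnesses the strict shrinking.

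The main obstacle I anticipate is the degenerate subcase where $k = g$ in the second scenario, since the pair $(g^{-1}, g)$ is excluded by the condition $h \neq g^{-1}$ and the element $g^2$ does not then appear among the added elements. I plan to rule this out by choosing $x$ generically in the interior of $e$ (so that the ``obstructing'' $k$ separating $y$ from $\overline{\Ext(S)}$ is a proper face of $\overline{\Ext(S)}$ distinct from $\I(g)$), and by exploiting the symmetry $e \subset \I(h)$: the analogous argument with $h$ replacing $g$ and $he$ replacing $ge$ produces the element $gh^{-1}$ and gives a second chance to shrink $\Ext(S)$. A further subtle point is to justify that ``$ge$ is not an edge of $\overline{\Ext(S)}$'' actually forces one of the two cases above to occur for at least one interior point $y \in ge$, rather than holding only as a boundary phenomenon along a measure-zero subset; this I would handle by a dimension count, noting that the boundary of $\I(g^{-1})\cap\overline{\Ext(S)}$ is a finite union of edges whose intersection with $ge$ is either $ge$ itself (contradicting the hypothesis) or a proper closed subset, leaving an open subarc of $ge$ to feed into one of the two cases.
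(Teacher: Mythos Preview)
Your approach is workable but substantially more elaborate than the paper's. You start from an edge and split into two cases (image lands in the interior of a face versus image lands outside $\overline{\Ext(S)}$), using a transversality argument for the first and a distance computation for the second. The paper bypasses this entirely by choosing the witness point differently: it takes a point~$x$ on a \emph{face} (not an edge) contained in~$\I(g^{-1})\cap\overline{\Ext(S)}$ such that~$g^{-1}x\notin\overline{\Ext(S)}$. Then $g^{-1}x\in\Int(h)$ for some~$h$ in the minimal defining set; since~$g^{-1}x\in\I(g)$ and~$\I(g)\not\subset\Int(h)$ (it contains a face of~$\overline{\Ext(S)}$), one gets~$\I(g)\cap\I(h)\neq\emptyset$, so the pair~$(g,h)$ is processed by the algorithm. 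Finally $\dist(x,0)=\dist(g^{-1}x,0)>\dist(hg^{-1}x,0)$ gives~$x\in\Int(hg^{-1})$, and since~$x\in\overline{\Ext(S)}$ the domain strictly shrinks. This is exactly your ``case~2'' computation, but applied directly to a face point rather than an edge point; the case split and the transversality lemma disappear.

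Your anticipated obstacle (the degenerate subcase~$k=g$, forcing you to chase a second chance via the~$h$--side) corresponds precisely to the paper's parenthetical ``so that~$h\neq g^{-1}$'', which the paper asserts without a detailed argument. So this subtlety is shared by both approaches; your plan to resolve it via genericity and the $g\leftrightarrow h$ symmetry is plausible but not obviously cleaner than what the paper leaves implicit. A small slip: the ``focus'' of~$\I(hg^{-1})$ is~$(hg^{-1})^{-1}\cdot 0=gh^{-1}\cdot 0$, not~$hg^{-1}\cdot 0$; your conclusion that the two planes are distinct (because~$h\cdot 0\neq 0$) is nonetheless correct.
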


\begin{proof}
 If there is a nonpaired edge, then there exists elements~$g,h\in S$ in the minimal defining set of~$\Ext(S)$ and a point~$x\in \I(g^{-1})\cap\overline{\Ext(S)}$ such that~$g^{-1}x\in\Int(h)$ (so that~$h\neq g^{-1}$). Since we also have~$g^{-1}x\in\I(g)$ and~$\I(g)$ is not contained in~$\Int(h)$, we get~$\I(g)\cap\I(h)\neq\emptyset$, so these elements will be considered in the loop. On the other hand we have~$\dist(x,0)=\dist(g^{-1}x,0) > \dist(g^{-1}hx,0)$, so~$x\in\Int(g^{-1}h)$: we have~$\Ext(S\cup\{g^{-1}h\})\varsubsetneq \Ext(S)$.
\end{proof}

\begin{rmk}
 Although this algorithm is less efficient than Algorithm~\ref{cp}, it is interesting as it gives a geometric understanding of the method described in~\cite{lip}: \emph{``we consider words that are two-word combinations of those forming the sides of the existing domain to modify the domain. (...) This procedure has proven to be fast and effective in practice.''} Proposition~\ref{propcp2} explains why taking products of two elements forming the sides of the domain is useful, and in Algorithm~\ref{cp2} we get a geometric description of the the products that we should form. Actually, the computation in the proof of Proposition~\ref{cp2} also shows that if~$\I(g^{-1}h)$ reduces~$\Ext(S)$, then~$\I(g)\cap\I(h)\neq\emptyset$.
\end{rmk}

The third routine, CheckCycleCondition, checks whether~$\Ext(S)$ satisfies the cycle condition. If it does not, it finds elements that make~$\Ext(S)$ smaller.

\begin{algorithm}[H]
\caption{CheckCycleCondition}
\label{ccc}
\begin{algorithmic}[1]
	\REQUIRE A finite subset~$S\subset\PSL_2(\C)$
	\ENSURE A new~$S$ s.t.~$\Ext(S)$ is smaller if it did not satisfy the cycle condition
	\STATE Compute every well-defined edge cycle
		\FORALL{$g$ cycle transformation for the edge~$e$}
			\IF{$g\neq \pm 1$ fixes at most one point in~$e$}\label{stepbegintorsion}
				\STATE\label{addloxo}$S\leftarrow S\cup\{g,g^{-1}\}$
			\ELSIF{$g\neq \pm 1$ fixes every point in~$e$}
				\STATE $S\leftarrow S\cup\gp{g}$\label{stepaddellstab}
			\ELSE
				\STATE $m\leftarrow$ length of the cycle\label{stepbegincycle}
				\FORALL{$0<i<m$}
					\STATE $h\leftarrow g_i\dots g_1$\label{stepoverlap}
					\STATE \label{stepshrink}add~$h, h^{-1}$ to~$S$
				\ENDFOR\label{stependcycle}
			\ENDIF
		\ENDFOR
	\RETURN $S$
\end{algorithmic}
\end{algorithm}

\begin{rmks}~
\begin{itemize}
 \item If we assume that every non-elliptic cycle has length three, then the steps~\ref{stepbegincycle}--\ref{stependcycle} are unnecessary, as in this case the partial cycle transformations at an edge contained in~$\I(g)\cap\I(h)$ are~$g$, $h=(hg^{-1})g$, $1=h^{-1}(hg^{-1})g$.
 \item If we know in advance that the group~$\Gamma$ is torsion-free, then we can omit the steps~\ref{stepbegintorsion}--\ref{stepaddellstab}.
 \item Assuming both, we can omit CheckCycleCondition completely.
\end{itemize}
\end{rmks}

\begin{lem}\label{lemshrinkdom}Suppose~$S\subset\Gamma$ is a subset of a Kleinian group~$\Gamma$ such that~$0$ has a trivial stabilizer in~$\Gamma$, and suppose there is an element~$h\in\Gamma\setminus\{\pm 1\}$ and a point~$x\in\Ext(S)$ such that~$h x\in\Ext(S)$. Then~$\Ext(S\cup\{h,h^{-1}\})\varsubsetneq \Ext(S)$.
\end{lem}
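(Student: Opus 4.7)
The plan is to produce an explicit point of $\Ext(S)$ that is not in $\Ext(h) \cap \Ext(h^{-1})$, which will establish strict inclusion since
\[\Ext(S \cup \{h,h^{-1}\}) = \Ext(S) \cap \Ext(h) \cap \Ext(h^{-1}).\]
Note first that since $0$ has trivial stabilizer in $\Gamma$ and $h \neq \pm 1$, neither $h$ nor $h^{-1}$ fixes $0$, so both isometric spheres $\I(h)$ and $\I(h^{-1})$ are well-defined, and we can unambiguously speak of $\Int(h)$, $\I(h)$, and $\Ext(h)$, and similarly for $h^{-1}$.

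I would then split on the comparison between $\dist(hx,0)$ and $\dist(x,0)$. If $\dist(hx,0) < \dist(x,0)$, then by definition $x \in \Int(h)$, so $x \in \Ext(S) \setminus \Ext(h)$ and we are done. If instead $\dist(hx,0) > \dist(x,0)$, set $y = hx$; by hypothesis $y \in \Ext(S)$, and
\[\dist(h^{-1}y, 0) = \dist(x,0) < \dist(hx,0) = \dist(y,0),\]
so $y \in \Int(h^{-1})$, giving $y \in \Ext(S) \setminus \Ext(h^{-1})$.

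The remaining and only mildly subtle case is the boundary case $\dist(hx,0) = \dist(x,0)$, i.e.\ $x \in \I(h)$. Here the key observation is that $\Ext(S)$ is open (as a finite intersection of the open sets $\Ext(g)$), so $x$ has an open neighborhood $U \subset \Ext(S)$. Since $\I(h)$ is a geodesic plane (a Euclidean sphere intersected with $\hypb$) passing through $x$, it locally separates $U$ into two nonempty open pieces, one contained in $\Int(h)$ and the other in $\Ext(h)$. Taking any point $y \in U \cap \Int(h)$ yields $y \in \Ext(S) \setminus \Ext(h)$, completing the proof.

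The only potential obstacle is this boundary case, but it dissolves immediately once one uses that $\Ext(S)$ is open; the first two cases are direct from the definitions of $\Int$ and $\Ext$.
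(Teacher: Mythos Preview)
Your proof is correct and follows essentially the same case split as the paper. The one place where you work harder than necessary is the boundary case $\dist(hx,0)=\dist(x,0)$: the paper absorbs this into the non-strict inequality $\dist(hx,0)\le\dist(x,0)$ and simply notes that then $x\in\overline{\Int(h)}$, hence $x\notin\Ext(h)$, so $x$ itself already witnesses $\Ext(S\cup\{h\})\varsubsetneq\Ext(S)$ --- no perturbation and no appeal to openness of $\Ext(S)$ is needed. Your openness argument is fine, but it tacitly assumes $S$ is finite (which the lemma does not state), whereas the direct observation works for arbitrary $S$.
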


\begin{proof}
 First suppose that~$\dist(x,0)<\dist(hx,0)$. Then writing~$x=h^{-1}(hx)=h^{-1}y$ we get~$\dist(h^{-1}y,0)<\dist(y,0)$ i.e.~$y\in\Int(h)$. Since we also have~$y\in\Ext(S)$, we obtain~$\Ext(S\cup\{h\})\varsubsetneq \Ext(S)$.

\medskip

Othewise we have~$\dist(hx,0)\le\dist(x,0)$. This means that~$x\in\overline{\Int(h^{-1})}$, but since~$x\in\Ext(S)$ we get~$\Ext(S\cup\{h^{-1}\})\varsubsetneq \Ext(S)$.
\end{proof}

\begin{prop}
 If~$\Ext(S)$ does not satisfy the cycle condition, then after applying Algorithm~\ref{ccc}, $\Ext(S)$ is strictly smaller.
\end{prop}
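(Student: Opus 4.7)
The plan is a case analysis following the three branches of Algorithm~\ref{ccc}: in each case where the cycle condition fails, I will exhibit an element $h$ (one of those added to $S$ by that branch) and a point $x\in\Ext(S)$ with $hx\in\Ext(S)$, so that Lemma~\ref{lemshrinkdom} forces $\Ext(S\cup\{h,h^{-1}\})\subsetneq\Ext(S)$. Let $C=(e_1,\dots,e_m)$ be an edge cycle at which the cycle condition fails, with cycle transformation $g=g_m\cdots g_1$ at $e_1$. By construction $g\cdot e_1=e_1$ as a set.

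Suppose first that $g\neq\pm 1$ and $g$ fixes at most one point of $e_1$, so property~(i) fails. Then $g$ acts on the geodesic line $\ell$ through $e_1$ as a non-trivial isometry (a translation along $\ell$ or a swap reflection), and we can choose an interior point $p$ of $e_1$ with $gp\neq p$ and $gp$ still interior to $e_1$. At both $p$ and $gp$, $\Ext(S)$ locally coincides with an open dihedral wedge of the same angle $\alpha(e_1)$ about $\ell$. The isometry $g$ carries the wedge at $p$ to a wedge of equal angle at $gp$. I will argue that this image wedge meets the wedge of $\Ext(S)$ at $gp$ in a set of positive angular measure: otherwise the tile $g\overline{\Ext(S)}$ would sit in an angular sector at $gp$ disjoint from every tile accounted for by the local $\Gamma$-tiling around $gp$, which contradicts the fact that $g\in\Gamma$. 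Any $x\in\Ext(S)$ in this overlap satisfies $gx\in\Ext(S)$, so Lemma~\ref{lemshrinkdom} applies with $h=g$.

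Suppose next that $g\neq\pm 1$ fixes $e_1$ pointwise, so $g$ is an elliptic rotation of some order $\nu$ about $\ell$, and property~(ii) demands $\alpha(e_1)=2\pi/\nu$. If $\nu\alpha(e_1)>2\pi$, then the $\nu$ sectors $g^k\overline{\Ext(S)}$ for $0\le k<\nu$ cannot be mutually disjoint in the $2\pi$-plane normal to $\ell$, so by pigeonhole some $0<k<\nu$ yields $x\in\Ext(S)$ with $g^kx\in\Ext(S)$; Lemma~\ref{lemshrinkdom} applies to $g^k\in\gp{g}$, which is added to $S$. If $\nu\alpha(e_1)<2\pi$, then these $\nu$ sectors leave a positive-measure gap around $\ell$, which therefore must be filled by other $\Gamma$-translates; this forces the rotational stabilizer around $\ell$ to be strictly larger than $\gp{g}$, and the associated longer cycle transformation at $e_1$ falls into Case~1 above and produces the shrinkage.

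Finally, if $g=\pm 1$ but the cycle is non-trivial, then $m\ge 2$ and some partial product $h_i=g_i\cdots g_1$ with $0<i<m$ is non-trivial and sends $e_1$ to a distinct edge $e_{i+1}$ of $\overline{\Ext(S)}$. Running the same wedge-overlap argument as in the first case, now between an interior point $p\in e_1$ and $h_ip\in e_{i+1}$, produces $x\in\Ext(S)$ near $p$ with $h_ix\in\Ext(S)$, and Lemma~\ref{lemshrinkdom} concludes with $h=h_i$.

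The main technical obstacle is the wedge-overlap claim used in the first and third cases: making rigorous the assertion that the image of a local dihedral wedge of $\Ext(S)$ under the relevant isometry intersects the target wedge of $\Ext(S)$ in positive measure. This reduces to an elementary 2-dimensional statement about two open angular sectors of positive angle in the $2\pi$-plane normal to a common geodesic, combined with the requirement that the local $\Gamma$-tiling be consistent. Failure of the cycle condition is precisely what makes disjointness of these sectors impossible, forcing the overlap that Lemma~\ref{lemshrinkdom} needs.
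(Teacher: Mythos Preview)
Your strategy of reducing every case to Lemma~\ref{lemshrinkdom} via a ``wedge-overlap'' is natural, but the central step is not justified and in fact can fail. You repeatedly invoke a ``local $\Gamma$-tiling around $gp$'', yet at this point in the algorithm $\Ext(S)$ is only the exterior domain of a finite set~$S$: there is no reason for its $\gp{S}$- or $\Gamma$-translates to cover any neighbourhood, so the sentence ``otherwise \ldots\ contradicts the fact that $g\in\Gamma$'' has no force. Concretely, in your first case, if $g$ is loxodromic with axis~$e_1$ and nontrivial rotational part (e.g.\ rotation by~$\pi$ with $\alpha(e_1)<\pi$), the image wedge $g\cdot\Ext(S)$ at~$gp$ may well be disjoint from the wedge of~$\Ext(S)$ there, and nothing you have assumed rules this out. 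The paper does not attempt an overlap argument here: it first observes that condition~(i) can only fail when $e$ is an entire geodesic, and then uses the direct geometric fact that for loxodromic~$g$ the interiors of $\I(g)$ and $\I(g^{-1})$ each contain one endpoint at infinity of the axis, so $\overline{\Ext(\{g,g^{-1}\})}\cap e\subsetneq e$; the order-$2$ elliptic case is handled similarly.

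The same gap recurs in your elliptic sub-case $\nu\,\alpha(e_1)<2\pi$: the claim that the stabilizer of~$\ell$ in~$\Gamma$ must be strictly larger than~$\gp{g}$ again presupposes a tiling, and even if such elements existed Algorithm~\ref{ccc} only adds~$\gp{g}$, so it would not find them. The paper instead uses the standing hypothesis (stated just before the algorithms) that every elliptic cycle has length~$1$: then $e\subset\I(g)\cap\I(g^{-1})$, the angle at~$e$ is a multiple of~$2\pi/\nu$, and Step~\ref{stepaddellstab} replaces $\Ext(\{g,g^{-1}\})$ by the Dirichlet domain of the finite cyclic group~$\gp{g}$, whose angle is exactly~$2\pi/\nu$. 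Finally, in your third case you do not address why the total cycle angle cannot be less than~$2\pi$ when $g=\pm 1$; the paper notes this is impossible because the identity cycle transformation returns to~$e_1$ preserving orientation. Your pigeonhole argument for $\nu\,\alpha(e_1)>2\pi$ and the overlap argument for non-elliptic cycles with angle~$>2\pi$ are essentially the paper's, and are fine.
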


\begin{proof}
 Since the cycle transformation at an edge stabilizes it, if the edge is not equal to a geodesic then the cycle transformation fixes it pointwise and condition~(i) is automatically satisfied. Suppose that there is a cycle for an edge~$e$ equal to a geodesic and that does not satisfy condition~(i), and let~$g$ be the corresponding cycle transformation. Then the transformation~$g$ is either loxodromic, or elliptic of order~$2$ with exactly one fixed point in~$e$. In both cases, Step~\ref{addloxo} is executed. In the first case, since the interior of the isometric sphere of a loxodromic element contains one of its fixed points and the interior of the isometric sphere of its inverse contains the other, we have~$\overline{\Ext(\{g,g^{-1}\})}\cap e\varsubsetneq e$ so~$\Ext(S\cup\{g,g^{-1}\})\varsubsetneq \Ext(S)$. In the second case, the edge~$e$ contains exactly one fixed point of~$g$ in~$\hyp^3$, so we again have~$\overline{\Ext(\{g\})}\cap e\varsubsetneq e$ and we get~$\Ext(S\cup\{g,g^{-1}\})\varsubsetneq \Ext(
S)$.

\medskip

Now suppose some cycle angle for a non-elliptic cycle is larger than~$2\pi$. Then considering the images~$P=\Ext(S),g_1^{-1} P,\dots,(g_i\dots g_1)^{-1} P$ of~$P=\Ext(S)$ that glue one after another around~$e$, we see that there is an overlap: there exists a point~$x\in P$ such that~$hx\in P$ for some~$h$ considered in Step~\ref{stepoverlap}. In this case after Step~\ref{stepshrink} we have~$\Ext(S\cup\{h,h^{-1}\})\varsubsetneq \Ext(S)$ by Lemma~\ref{lemshrinkdom}. Since the cycle transformation is the identity, the angle cannot be smaller than~$2\pi$.

\medskip

Finally suppose some cycle angle for an elliptic cycle at an edge~$e$ with cycle transformation~$g$ with order~$\nu$ does not satisfy condition~(ii). The cycle has length~$1$, so~$e\subset\I(g)\cap\I(g^{-1})$, and the angle at~$e$ is a multiple of~$\frac{2\pi}{\nu}$. After running Step~\ref{stepaddellstab} the domain $\Ext(\{g,g^{-1}\})$ is replaced by the Dirichlet domain of the finite group~$\gp{g}$, which satisfies the cycle condition, so the new angle at~$e$ is equal to~$\frac{2\pi}{\nu}$.
\end{proof}

The fourth routine, CheckComplete, checks whether~$\Ext(S)$ is complete. If it is not, it finds elements that make~$\Ext(S)$ smaller.

\begin{algorithm}[H]
\caption{CheckComplete}
\label{chcomplete}
\begin{algorithmic}[1]
	\REQUIRE A finite subset~$S\subset\PSL_2(\C)$
	\ENSURE A new~$S$ such that~$\Ext(S)$ is smaller if it was not complete
	\STATE Compute every tangency vertex cycle
		\FORALL{$h$ tangency vertex transformation}
			\IF{$h\neq 1$ is loxodromic}
				\STATE\label{addloxotv}add~$h,h^{-1}$ to~$S$
			\ENDIF
		\ENDFOR
	\RETURN $S$
\end{algorithmic}
\end{algorithm}

\begin{rmk}
 If we know in advance that the group~$\Gamma$ is cocompact, we can omit CheckComplete in Algorithm~\ref{basisalg} and simply test whether~$\Ext(S)$ is bounded.
\end{rmk}

\begin{prop}
 If~$\Ext(S)$ is not complete, then after applying Algorithm~\ref{ccc}, $\Ext(S)$ is strictly smaller.
\end{prop}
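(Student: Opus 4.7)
The plan is to exhibit a point $w \in \Ext(S)$ that lies in $\Int(h) \cup \Int(h^{-1})$, where $h$ is the tangency vertex transformation whose failure to be parabolic witnesses the lack of completeness. Such a $w$ would immediately give $\Ext(S \cup \{h, h^{-1}\}) \subsetneq \Ext(S)$, and since Algorithm~\ref{chcomplete} adds exactly $h$ and $h^{-1}$ in this situation, this yields the claim. First I would unpack the hypothesis: there is a tangency vertex cycle $(z_1,\dots,z_m)$ whose transformation $h = g_m\cdots g_1$ is not parabolic. The periodic structure of the cycle forces $h(z_1) = z_1$, and since the algorithm addresses the loxodromic case, I assume $h$ is loxodromic with $z_1$ as one of its two fixed points on $\partial\hypb$.

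The geometric heart of the argument is the claim that for a loxodromic isometry $h$ with fixed point $z_1 \in \partial\hypb$, there is a neighborhood $U$ of $z_1$ in $\overline{\hypb}$ such that $U \cap \hypb \subset \Int(h) \cup \Int(h^{-1})$. To establish this I would conjugate so that the two fixed points of $h$ sit at $0$ and $\infty$ in the upper half-space model, making $h$ the explicit map $(z,t) \mapsto (\lambda^2 z, |\lambda|^2 t)$ for some $|\lambda| > 1$. Using the closed-form distance formula in the upper half-space, a direct calculation shows that near the attracting fixed point $\infty$ one has $\dist(h^{-1} w, 0) < \dist(w, 0)$, so $w \in \Int(h^{-1})$, and symmetrically near the repelling fixed point $w \in \Int(h)$. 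Thus a punctured neighborhood of either fixed point, in particular of $z_1$, lies in $\Int(h) \cup \Int(h^{-1})$.

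To finish, I would use that $z_1$ being a tangency vertex of $\overline{\Ext(S)}$ means $z_1 \in \overline{\Ext(S)} \cap \partial\hypb$, so the open set $\Ext(S)$ accumulates at $z_1$. Picking any $w \in \Ext(S) \cap U \cap \hypb$ then produces the desired witness.

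The main obstacle I anticipate is the elliptic case: if $h$ happens to be elliptic, the algorithm does not modify $S$, and my argument breaks down because the action of $h$ near $z_1$ is by rotation rather than a contraction/expansion, so a punctured neighborhood of $z_1$ need not lie in $\Int(h) \cup \Int(h^{-1})$. I would need either to argue that this case cannot arise as a genuine counterexample, perhaps by noting that torsion-induced obstructions at edges have already been handled by CheckCycleCondition and that an elliptic tangency vertex transformation would force an incompatibility with the cycle condition, or else to interpret the statement as restricted to loxodromic tangency vertex transformations.
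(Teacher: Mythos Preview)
Your overall strategy matches the paper's: show that the tangency vertex transformation~$h$ fixes~$z_1$, that~$h$ is loxodromic, and that a fixed point of a loxodromic element lies in~$\Int(h)\cup\Int(h^{-1})$; since~$z_1$ lies in the closure of~$\Ext(S)$, this forces~$\Ext(S\cup\{h,h^{-1}\})\subsetneq\Ext(S)$. Your careful treatment via a neighborhood of~$z_1$ is in fact a more rigorous version of the paper's one-line claim that ``the point~$z$ is contained in~$\Int(h)\cup\Int(h^{-1})$'', since~$\Int(h)$ is by definition a subset of~$\hypb$ and~$z_1\in\partial\hypb$.

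The one genuine gap you correctly identify is the elliptic case, and your two suggested escapes are both unsatisfactory: the cycle condition concerns edges, not tangency vertices, so it does not obviously rule out an elliptic tangency transformation; and restricting the statement to the loxodromic case would leave the proposition unproved. The paper closes this gap with a short geometric argument that you are missing: following the successive images of the polyhedron along the tangency cycle, one sees that~$\I(g')$ separates~$\I(g)$ from~$h\,\I(g)$ (where~$z_1=\I(g)\cap\I(g')$). In particular~$h\,\I(g)\neq\I(g)$ for every power of~$h$ obtained this way, which forces~$h$ to have infinite order and hence to be loxodromic once parabolicity fails. Adding this observation completes your argument.
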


\begin{proof}
 If~$h$ is a tangency vertex transformation at~$z=\I(g)\cap\I(g')\in\partial{\hypb}$, then it fixes~$z$. By looking at the successive images of the polyhedron along the cycle we see that~$\I(g')$ separates~$\I(g)$ from~$h\I(g)$, so~$h$ has infinite order. If~$\Ext(S)$ is not complete, then~$h$ is loxodromic. Being a fixed point of~$h$, the point~$z$ is contained in~$\Int(h)\cup\Int(h^{-1})$, so we get~$\Ext(S\cup\{h,h^{-1}\})\varsubsetneq \Ext(S)$.
\end{proof}

\begin{prop}
 \label{propbasisalg}Let $\Gamma$ be a Kleinian group. The following holds for Algorithm~\ref{basisalg} applied to~$\Gamma$:
\begin{enumerate}[(i)]
	\item\label{correctsg} Suppose the algorithm terminates. Then the output is a normalized basis for~$\Gamma$.
	\item\label{termtriv} Suppose that~$\Gamma$ is geometrically finite and Enumerate is a complete enumeration of~$\Gamma$. Then the algorithm terminates.
\end{enumerate}
\end{prop}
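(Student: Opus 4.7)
For part~(\ref{correctsg}), I plan to combine the termination conditions with Poincaré's theorem. When the outer loop exits, IsFullGroup has returned \textbf{true}, so $\gp{S}=\Gamma$. When the inner loop stops, $\Ext(S)$ is unchanged after one full pass through KeepSameGroup, CheckPairing, CheckCycleCondition, and CheckComplete. By the propositions proved for each of these routines, if $\Ext(S)$ failed to have a face-pairing, to satisfy the cycle condition, or to be complete, then $\Ext(S)$ would shrink strictly -- contradicting the loop termination. Hence all hypotheses of Theorem~\ref{poincthm} hold, and $\Ext(S)$ is a fundamental polyhedron for $\gp{S}=\Gamma$. Since KeepSameGroup forces $S$ to be a minimal defining set of $\Ext(S)$, the set $S$ is a normalized basis.

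For part~(\ref{termtriv}), the strategy is to show that a finite normalized basis is eventually enumerated and that, once it is, the algorithm stabilizes uniformly. Because $\Gamma$ is geometrically finite, $D_0(\Gamma)$ has finitely many faces, and the corresponding face-pairing transformations form a finite normalized basis $T\subset\Gamma$ by the discussion preceding Theorem~\ref{presthm}. Since Enumerate is a complete enumeration, there is an integer $n_0$ with $T\subset\bigcup_{n\le n_0}\text{Enumerate}(\Gamma,n)$, so after at most $n_0$ iterations of the inner loop the current $S$ contains $T$.

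The crux of the argument is then the observation that once $T\subseteq S\subseteq \Gamma\setminus\{1\}$, we have
\[D_0(\Gamma)=\Ext(\Gamma\setminus\{1\})\subseteq\Ext(S)\subseteq\Ext(T)=D_0(\Gamma)\text{,}\]
so $\Ext(S)=D_0(\Gamma)$. Every subsequent element enumerated or introduced by the four routines lies in $\Gamma\setminus\{1\}$, so $\Ext(S)$ cannot shrink below $D_0(\Gamma)$ and therefore remains equal to $D_0(\Gamma)$ throughout. The inner loop exit condition ``$\Ext(S)$ does not change'' is then satisfied and the inner loop terminates. After the inner loop we have $\gp{S}\supseteq\gp{T}=\Gamma$, so IsFullGroup returns \textbf{true} and the outer loop halts as well.

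The main obstacle in part~(\ref{termtriv}) is that the four routines can keep modifying $S$ even after $\Ext(S)$ has reached the Dirichlet domain -- for instance, CheckCycleCondition may insert powers of torsion elements, and CheckComplete may insert tangency vertex transformations. My approach sidesteps this by using the uniform lower bound $\Ext(A)\supseteq D_0(\Gamma)$ for any $A\subset\Gamma\setminus\{1\}$: combined with the upper bound arising from $T\subseteq S$, this pins $\Ext(S)$ to $D_0(\Gamma)$ regardless of what extra elements the routines insert, so the inner loop's test on $\Ext(S)$ (rather than on $S$ itself) is ultimately triggered.
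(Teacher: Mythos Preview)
Your approach matches the paper's, and for part~(\ref{termtriv}) your sandwich argument $D_0(\Gamma)\subseteq\Ext(S)\subseteq\Ext(T)=D_0(\Gamma)$ is a welcome elaboration of the paper's one-line ``the algorithm will then terminate as all the conditions are satisfied by the Dirichlet domain.'' Two points deserve attention.

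In part~(\ref{correctsg}) your order of deduction should be reversed. The blackbox IsFullGroup is only specified on inputs that are already normalized bases for~$\gp{S}$, so its output cannot be used until that is established. You should first deduce from the inner loop's termination, via the propositions on the four routines and Theorem~\ref{poincthm}, that~$S$ is a normalized basis for~$\gp{S}$; only then does IsFullGroup returning \textbf{true} yield~$\gp{S}=\Gamma$. This is the order the paper uses.

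In part~(\ref{termtriv}) you assert that after~$n_0$ iterations ``the current~$S$ contains~$T$'', but KeepSameGroup replaces~$S$ by a minimal defining set between successive Enumerate calls, so an element of~$T$ enumerated early could in principle be dropped before the rest of~$T$ arrives. A cleaner invariant is that~$\Ext(S)$ is monotone non-increasing throughout Algorithm~\ref{basisalg}: KeepSameGroup first passes to a minimal defining set (which preserves~$\Ext(S)$) and then only adds elements, and every other routine only adds elements. Hence once each~$t\in T$ has been enumerated at some stage, $\Ext(S)\subseteq\Ext(T)=D_0(\Gamma)$ thereafter; together with the lower bound you already give, this pins~$\Ext(S)$ to~$D_0(\Gamma)$ as you intend.
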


\begin{rmk}\label{rmkterm}
In practise Algorithm~\ref{basisalg} runs much faster that the naive Algorithm~\ref{naivebasisalg} (see section~\ref{cmpnb}), but unfortunately we could not prove it. What we believe is that in Algorithm~\ref{basisalg} the blackbox Enumerate only needs to find a set of generators for the group, and then the other routines find the elements of the normalized basis; in Algorithm~\ref{naivebasisalg} the blackbox Enumerate needs to find directly the elements of the normalized basis, which is harder. The natural idea would be to put the routines in a loop that would not contain Enumerate in Algorithm~\ref{basisalg}, but then it is not clear whether this internal loop would terminate; actually in general it is false, since~$\Gamma$ may admit finitely generated subgroups that are not geometrically finite.
\end{rmk}

\smallskip

\begin{proof}~
 \begin{enumerate}[(i)]
  \item If the algorithm terminates, then by Theorem \ref{poincthm}, since~$\Ext(S)$ is complete, has a face-pairing and satisfies the cycle condition, the set~$S$ is a normalized basis for~$\gp{S}$. It is then valid to use~IsFullGroup to check that~$\gp{S}=\Gamma$.
  \item The closure of the Dirichlet domain centered at~$0$ for~$\Gamma$ has finitely many faces by geometric finiteness. Since Enumerate is a complete enumeration, a defining set for this Dirichlet domain will be enumerated after a finite number of steps. The algorithm will then terminate as all the conditions are satisfied by the Dirichlet domain.
 \end{enumerate}

\end{proof}


\subsection{Instantiation of the blackboxes}



\subsubsection{Enumerate and IsFullGroup for a group given by generators}

Suppose the group~$\Gamma$ is given by a finite set of generators~$G$. We can take for Enumerate the algorithm that writes every word of length~$n$ in the generators, and we can take for IsFullGroup the algorithm that reduces every element in~$G$ with respect to the given normalized basis~$S$ and returns whether every generator reduces to~$\pm 1$: by Proposition \ref{propuniquered}, this is equivalent to~$\Gamma\subset\gp{S}$.

\subsubsection{Enumerate and IsFullGroup for an arithmetic group}

We provide a possible instantiation of the blackboxes Enumerate and IsFullGroup for an arithmetic group~$\Gamma(\order)$ attached to a maximal order~$\order$ in a Kleinian quaternion algebra~$B$ with base field~$F$ of degree~$n$.

\par We describe IsFullGroup first. A subgroup is proper if and only if its covolume is infinite or at least twice the covolume of~$\Gamma$, the quotient of the covolumes being the index of the subgroup. Since~$\Gamma$ comes from a maximal order, the covolume of~$\Gamma$ is given by~\eqref{covolumeformula}, which we can compute, and the covolume of a subgroup can be computed with Algorithm~\ref{algovol} once we have a normalized basis. We take for IsFullGroup the algorithm that computes the covolume~$\covol(\Gamma)$ by the formula and the volume~$V$ of~$\Ext(S)$ for the given normalized basis~$S$, and returns whether~$\frac{V}{\covol(\Gamma)} < 2$. Since~$S$ is a normalized basis for~$\gp{S}$, the polyhedron~$\Ext(S)$ is a fundamental domain for~$\gp{S}$ so the volume~$V$ equals the covolume of~$\gp{S}$.

\medskip

We now describe an instantiation of the blackbox Enumerate for the Kleinian group associated with an order~$\order$ in~$B$. 
Under the natural embedding~$\order \subset B\hookrightarrow B\otimes_{\Q}\R$, the order~$\order$ is discrete. Now suppose that we have a positive definite quadratic form~$Q : B\otimes_{\Q}\R \to \R$. Then~$\order$ becomes a full lattice in a real vector space of dimension~$4n$. We can use lattice enumeration algorithms such as the Kannan-Fincke-Pohst algorithm~\cite{fp, kannan} to enumerate elements in~$\order$ that are short with respect to~$Q$. We can then select the elements having reduced norm~$1$. As we increase the bound on the values of~$Q$, we will get every element in~$\order^1$. A priori any such quadratic form would work, but here we describe one that has a geometric meaning.

\medskip

Recall we can embed~$B$ in~$M_2(\C)$ in such a way that~$\order^1$ becomes discrete in~$\SL_2(\C)$. This embedding is only defined up to conjugation by an element of~$\PSL_2(\C)$. Let~$\rho$ be such an embedding. If~$B=\quatalg{a,b}{F}$ we can take for example
\[\rho: x+yi+zj+tij\mapsto \begin{pmatrix}x+y\alpha&z+t\alpha\\(z-t\alpha)\beta&x-y\alpha\end{pmatrix}\]
where~$\sigma$ is a complex embedding of~$F$, $\beta=\sigma(b)$ and~$\alpha$ is a square root of~$\sigma(a)$.

\par For~$m=\sm{a&b\\c&d}\in\mat_2(\C)$, we define~$\invrad(m)=\left|(c+\bar{b})+(d-\bar{a})j\right|^2$.

\begin{prop}The quadratic form $Q : B\otimes\R \to \R$ defined by
\[Q(x)=\invrad(\rho(x))+\tr_{F/\Q}(\nrd(x))\text{ for all } x\in B\]
is positive definite and satisfies
\[Q(x)=\frac{4}{\rad(\rho(x))^2}+n \text{ for all } x\in\order^1\]
where~$\rad(g)$ denotes the Euclidean radius of the isometric sphere of~$g\in\SL_2(\C)$ if~$g\act 0\neq 0$, and~$\infty$ otherwise.
\end{prop}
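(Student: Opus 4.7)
The plan is to decompose $B\otimes_\Q\R$ via the archimedean places of $F$ and handle each factor separately. Since $F$ is ATR of degree $n$ and $B$ is Kleinian, we have an $\R$-algebra isomorphism $B\otimes_\Q\R\cong\Hamil^{n-1}\times M_2(\C)$: the $n-1$ Hamiltonian factors come from the real places of $F$ at which $B$ is ramified, and the $M_2(\C)$ factor comes from the complex place via $\rho$. Under this decomposition, $\tr_{F/\Q}\circ\nrd$ splits as the sum of the Hamilton reduced norms at the real factors and $2\operatorname{Re}\circ\det$ at the complex factor, while $\invrad(\rho(\cdot))$ depends only on the complex component.

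For positive definiteness, the Hamilton reduced norm is manifestly positive definite, so the task reduces to the complex factor. Writing $\rho(x)=\sm{a&b\\c&d}$ with $a,b,c,d\in\C$, the definition of the Hamiltonian norm gives
\[
\invrad(\rho(x))=|c+\bar b|^2+|d-\bar a|^2=|a|^2+|b|^2+|c|^2+|d|^2-2\operatorname{Re}(ad-bc),
\]
and therefore
\[
\invrad(\rho(x))+2\operatorname{Re}(\det(\rho(x)))=|a|^2+|b|^2+|c|^2+|d|^2,
\]
i.e.\ the Frobenius norm squared of $\rho(x)$. Since $\rho$ restricts to an $\R$-linear isomorphism from the complex factor of $B\otimes_\Q\R$ onto $M_2(\C)$, this is positive definite on the complex factor, and combined with the real factors it proves positive definiteness of $Q$ on all of $B\otimes_\Q\R$.

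For the second assertion, let $x\in\order^1$. Then $\nrd(x)=1\in F$, so $\tr_{F/\Q}(\nrd(x))=[F:\Q]=n$. The isometric sphere formula \eqref{eqradius} asserts that, when $\rho(x)\in\SL_2(\C)$ does not fix $0$, one has $\rad(\rho(x))=2/|C|$ with $C=c+\bar b+(d-\bar a)j$; hence $|C|^2=\invrad(\rho(x))$ and thus $\invrad(\rho(x))=4/\rad(\rho(x))^2$. The degenerate case $\rho(x)\act 0=0$ corresponds to $C=0$, where $\invrad(\rho(x))=0$ matches the convention $4/\infty^2=0$. Adding both contributions yields the claimed formula.

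The crux of the argument is the elementary identity at the complex place expressing $\invrad+2\operatorname{Re}(\det)$ as the Frobenius norm squared: $\tr_{F/\Q}\circ\nrd$ is \emph{not} positive definite there (the real part of the determinant on $M_2(\C)$ has signature $(4,4)$), and the purpose of the $\invrad$ term is precisely to cancel the indefinite part and restore positive definiteness. The verification is a direct expansion rather than a deep obstacle, but it is the one step where the geometric quantity $\invrad$ interacts nontrivially with the algebraic quantity $\nrd$, and everything else follows from place-by-place bookkeeping.
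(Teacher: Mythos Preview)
Your argument is essentially identical to the paper's: decompose $B\otimes_\Q\R$ over the archimedean places, use the identity $\invrad(m)+2\Re(\det m)=\|m\|^2$ at the complex factor, and combine with the positive definite Hamilton norms at the real factors; the formula on $\order^1$ then follows from~\eqref{eqradius} and $\tr_{F/\Q}(1)=n$.

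One small slip to fix: an ATR field of degree $n$ has one complex place and $n-2$ real places (since $n=r_1+2r_2$ with $r_2=1$), so the decomposition is $B\otimes_\Q\R\cong \mat_2(\C)\oplus\Hamil^{n-2}$, not $\Hamil^{n-1}$. This does not affect the logic of your proof.
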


\begin{proof}
We show first that~$Q$ is positive definite. For a matrix~$m\in\mat_2(\C)$ we have~$\invrad(m) = |c+\bar{b}|^2+|d-\bar{a}|^2 = \|m\|^2 - 2\,\Re(\det m)$ where $\|\cdot\|$ is the usual $L^2$ norm on $\mat_2(\C)$, so that $\|\cdot\|^2$ is a positive definite quadratic form on $\mat_2(\C)$. Since~$\nrd$ is a positive definite quadratic form on~$\Hamil$ and we have the decomposition~$B\otimes\R\cong\mat_2(\C)\oplus\Hamil^{n-2}$, we can construct a positive definite quadratic form on $B\otimes\R$ by letting for all $x\in B\otimes\R$
	\[Q(x)=\|m\|^2+\nrd(h_1)+\dots+\nrd(h_{n-2})=\invrad(m)+\tr_{F\otimes\R/\R}(\nrd(x))\]
	where
	\[x=m+h_1+\dots+h_{n-2}\in \mat_2(\C)\oplus\Hamil^{n-2}\text{,}\]
	since $2\,\Re(\det m)+\nrd(h_1)+\dots+\nrd(h_{n-2})=\tr_{F\otimes\R/\R}(\nrd(x))$.
	This gives the positive definiteness.

\medskip
	
	For the formula on~$\order^1$, note that according to~\eqref{eqradius}, it is
	\[\invrad(g)=\left|(c+\bar{b})+(d-\bar{a})j\right|^2=\frac{4}{\rad(g)^2}\]
	for $g\in\SL_2(\C)$ not fixing $0$ in $\hypb$, and if $g$ fixes $0$ then $\invrad(g)=0$.
\end{proof}

We obtain the following enumeration algorithm. It is a complete enumeration of~$\Gamma(\order)$, and depends on a parameter: a sequence of bounds~$A_n\to\infty$.

\begin{algorithm}[H]
\caption{Enumerate}
\label{enumbigball}
\begin{algorithmic}[1]
	\REQUIRE A positive integer~$n$
	\ENSURE A finite subset~$L\subset\Gamma(\order)$
	\STATE $L\leftarrow \emptyset$
	\FORALL{$x\in\order$ such that~$Q(x)\le A_n$}
		\IF{$\nrd(x)=1$}
			\STATE Add $\rho(x)$ to~$L$
		\ENDIF
	\ENDFOR
	\RETURN $L$
\end{algorithmic}
\end{algorithm}

We are now going to present a probabilistic enumeration algorithm. It is not a complete enumeration, but performs better in pratice (see section~\ref{exenum}). It uses variants of the former quadratic form.

\begin{defi}
 Let~$z_1,z_2\in\hyp^3$. Let~$h_1,h_2\in\SL_2(\C)$ be such that~$z_1=h_1\act j$ and~$z_2=h_2\act j$. We then define the quadratic form~$Q_{z_1,z_2}$ by
\[Q_{z_1,z_2}(x)=\invrad(h_2^{-1}\rho(x)h_1)+\tr_{F/\Q}(\nrd(x))\]
for all~$x\in B$.
\end{defi}

This family of quadratic forms has the following properties.

\begin{prop}
 Let~$z_1,z_2\in\hyp^3$. Then~$Q_{z_1,z_2}$ does not depend on the choice of~$h_1,h_2\in\SL_2(\C)$ such that~$z_1=h_1\act j$ and~$z_2=h_2\act j$. It is positive definite, and for all~$g\in\order^1$ we have
\[Q_{z_1,z_2}(g) = 2\cosh\dist(gz_1,z_2)-2+n\text{.}\]
\end{prop}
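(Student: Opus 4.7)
The plan is to split the claim into three parts---independence of the choice of lifts, positive definiteness, and the distance formula---and handle them in turn, each being a short adaptation of the argument for the preceding proposition together with one classical identity.

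For the first part, the key observation is that the stabilizer of~$j$ in~$\PSL_2(\C)$ is~$\PSU_2(\C)$, so any alternative lifts differ from~$h_1,h_2$ by right multiplication by elements of~$\SU_2(\C)$ (the choice of sign does not affect~$\invrad$, which is quadratic). I would then show bi-invariance of $m\mapsto\invrad(h_2^{-1}mh_1)$ under $\SU_2(\C)$ using the identity $\invrad(m)=\|m\|^2-2\Re(\det m)$ from the proof of the preceding proposition: the Frobenius norm on~$\mat_2(\C)$ is unitarily bi-invariant, and~$\det k=1$ for~$k\in\SU_2(\C)$, so both summands are unchanged.

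For positive definiteness, I would reuse the decomposition $B\otimes\R\cong\mat_2(\C)\oplus\Hamil^{n-2}$ from the earlier proof verbatim. Writing~$x=m+h_1+\cdots+h_{n-2}$ and using~$\det(h_2^{-1}mh_1)=\det m$ gives
\[Q_{z_1,z_2}(x)=\|h_2^{-1}mh_1\|^2+\nrd(h_1)+\cdots+\nrd(h_{n-2})\text{,}\]
a sum of positive semidefinite forms whose common null space is trivial (the first vanishes only at~$m=0$ since $m\mapsto h_2^{-1}mh_1$ is a linear isomorphism of~$\mat_2(\C)$).

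For the distance formula, set $M=h_2^{-1}\rho(g)h_1\in\SL_2(\C)$ for $g\in\order^1$. Then $\det M=1$ gives $\invrad(M)=\|M\|^2-2$, and $\nrd(g)=1$ gives $\tr_{F/\Q}(\nrd(g))=n$. Moreover $M\cdot j=h_2^{-1}(gz_1)$, so by the~$\PSL_2(\C)$-invariance of the hyperbolic metric, $\dist(j,M\cdot j)=\dist(z_2,gz_1)$. The statement thus reduces to the classical identity
\[\|M\|^2=2\cosh\dist(j,M\cdot j)\quad\text{for all }M\in\SL_2(\C)\text{.}\]
To verify it, I would write~$M=\sm{a&b\\c&d}$, compute $M\cdot j = \bigl((a\bar{c}+b\bar{d})+j\bigr)/(|c|^2+|d|^2)$ in~$\Hamil$ using the rule $jz=\bar{z}j$ and $j^2=-1$, so the height of~$M\cdot j$ is~$t=1/(|c|^2+|d|^2)$. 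Plugging this into the upper half-space formula $\cosh\dist(w,j)=(|z|^2+t^2+1)/(2t)$ for~$w=z+tj$ and clearing denominators reduces the identity to the Lagrange-type relation
\[(|a|^2+|b|^2)(|c|^2+|d|^2)=|ad-bc|^2+|a\bar{c}+b\bar{d}|^2\text{,}\]
which follows by a direct expansion, the two mixed real-part contributions cancelling by commutativity in~$\C$. This last identity is the only genuine computation; everything else is bookkeeping within the framework already established.
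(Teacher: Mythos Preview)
Your proof is correct and follows essentially the same route as the paper's: both use the identity~$\invrad(m)=\|m\|^2-2\Re(\det m)$ together with unitary bi-invariance for well-definedness, and reduce the distance formula to the classical~$\|M\|^2=2\cosh\dist(j,M\cdot j)$, which the paper simply cites as well-known while you verify it explicitly. One cosmetic point: in your positive-definiteness paragraph the symbols~$h_1,h_2$ collide with the lifts~$h_1,h_2\in\SL_2(\C)$ fixed earlier, so rename the Hamiltonian components.
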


\begin{proof}
 The matrices~$h_1$ and~$h_2$ are defined up to right multiplication by~$\SU_2(\C)$, the stabilizer of the point~$j$. For all matrices~$m\in\mat_2(\C)$ we have~$\invrad(m) = \|m\|^2 - 2\,\Re(\det m)$, which is not changed by left and right multiplication of~$m$ by elements of~$\SU_2(\C)$, so that~$Q_{z_1,z_2}$ does not depend on the choice of~$h_1$ and~$h_2$.

\par For~$z_1=z_2=j$ the formula reads~$\|g\|^2 = 2\cosh\dist(g j, j)$ for all~$g\in\SL_2(\C)$, which is well-known (and is a direct consequence of the explicit formulas for the hyperbolic distance). Then for arbitrary~$z_1,z_2\in\hyp^3$ we have
\[\|h_2^{-1}gh_1\|^2 = 2\cosh\dist(h_2^{-1}gh_1 j, j) = 2\cosh\dist(gh_1 j, h_2j)=2\cosh\dist(g z_1, z_2)\text{.}\]
\end{proof}

This family of quadratic forms is very useful, as it enables us to determine the elements~$g\in\Gamma(\order)$ such that~$gz_1$ is close to~$z_2$. We propose the following probabilistic algorithm for enumerating elements in~$\Gamma(\order)$. It depends on a choice of some parameters: an increasing sequence of positive numbers~$R_n\to\infty$ representing the radius of the search space, a sequence of positive integers~$N_n\in\Z_{>0}$ representing the number of enumerations in small balls, and a positive number~$A$ being a bound on the quadratic form. For~$w_1,w_2\in\hypb$, we write~$Q_{w_1,w_2}=Q_{\eta^{-1}(w_1),\eta^{-1}(w_2)}$.

\begin{algorithm}[H]
\caption{Enumerate'}
\label{enumsmallballs}
\begin{algorithmic}[1]
	\REQUIRE An positive integer~$n$
	\ENSURE A finite subset~$L\subset\Gamma(\order)$
	\STATE $L\leftarrow \emptyset$
	\FOR{$i=1$ to $N_n$}
		\STATE Draw a point~$w\in\hypb$ such that~$\dist(0,w)\le R_n$ randomly, uniformly w.r.t. the hyperbolic volume
		\FORALL{$x\in\order$ such that~$Q_{0,w}(x)\le A$}
			\IF{$\nrd(x)=1$}
				\STATE Add $\rho(x)$ to~$L$
			\ENDIF
		\ENDFOR
	\ENDFOR
	\RETURN $L$
\end{algorithmic}
\end{algorithm}

\begin{rmks}~
\begin{itemize}
  \item We can also use these quadratic forms differently: if we miss an element of the group to ``close off'' the exterior domain around a point at infinity~$\xi$, we can look for elements of small~$Q_{j,z}$ where~$z\to\xi$. This is a similar idea as in Remark 4.9 in~\cite{voightfuchsian}, but the quadratic form that was used there is the analogue of~$Q_{z,z}$. If~$g$ is the element that we are looking for, $\dist(gz,z)$ is bounded by below by a positive constant if~$g$ is loxodromic, which is the generic case. On the contrary we have~$\dist(g j,z)\to 0$ as~$z\to g j$.
  \item The efficiency of this algorithm depends on the choice of the parameters~$N_n$, $R_n$ and~$A$. Heuristics led us to the following choice, which works well in practise:
  \begin{itemize}
    \item we use a small bound~$A=\alpha\cdot |\Delta_F N(\Delta_B)|^{\frac{1}{4[F:\Q]}}$ so that the number of~$x\in\order$ such that~$Q_{0,w}(x)\le A$ is approximately constant by Gaussian heuristic;
    \item experimental evidence and~\cite[Theorem 1.5]{generators} suggest that a number of random elements of~$\Gamma$ proportional to~$\covol(\Gamma)$ has a good probability to generate~$\Gamma$, and by Gaussian heuristic we need $O(\covol(\Gamma))$ random centers to obtain one element of the group on average, so we choose~$N_0 = \beta \cdot \covol(\Gamma)^2$, and we increase it exponentially fast:~$N_n = (1+\eta)^n N_0$;
    \item the radius~$R_n$ has to be large enough to ensure good randomness of the elements of~$\Gamma$, so we choose~$R_0$ such that~$\vol(\ball(w,R_0)) = \covol(\Gamma)^\gamma$ and we increase it in arithmetic progression (so the volume increases exponentially fast):~$R_n=R_0+\epsilon\cdot n$. Because of our choice of~$N_n$ we take~$\gamma>2$.
  \end{itemize}
\end{itemize}
\end{rmks}

Now we explain how we draw points at random in the ball~$\ball(0,R)$ of radius~$R$. Since the hyperbolic volume is invariant by rotation around~$0$, it is equivalent to draw a random point uniformly on the sphere, and then multiply it by an appropriate random scalar independent from the point on the sphere. Thus we only have to determine the distribution of the distance from~$0$ of the points in the ball of radius~$R$. Let~$X$ be a random variable with uniform distribution in~$\ball(0,R)$. The cumulative distribution function of the distance to~$0$ is
\[f_R(r) = \proba_X(\dist(X,0)\le r)=\frac{\vol(\ball(0,r))}{\vol(\ball(0,R))}\text{.}\]
Recall that the volume~$v(r)$ of the ball of radius~$r$ is~$v(r)=\pi(\sinh(2r)-2r)$. It is clear that the function~$f_R : [0,R]\to [0,1]$ is a continuous bijection. It implies that~$\dist(0,X)=f_R^{-1}(U)$ where~$U$ is a uniform random variable in~$[0,1]$. We rewrite that expression as~$\dist(0,X)=v^{-1}(U')$ where~$U'$ is a uniform variable in~$[0,v(R)]$. It is well-known how to draw a uniform variable in an interval and on a sphere, and~$v^{-1}$ can be computed by Newton iteration.

\subsection{Floating-point implementation}\label{float-impl}

Here we describe a floating-point implementation of the above algorithms. We start with a lemma giving us control on the error made when having an element of the group act on a point. We only study the stability of the algorithm, so we do not take into account the error made by rounding in elementary operations.


\begin{lem}\label{lemapprox}
 Let~$g\in\SL_2(\C)$, $\tg\in\mat_2(\C)$ and~$w,\tw\in\hypb$. Let~$\epsilon=|w-\tw|$, $\eta=\|g-\tg\|$ and~$\delta=\frac{1}{1-|w|^2}$. Suppose that~$(\nmg\epsilon+2\eta)^2\le\frac{1}{3\delta}$. Then the quantity~$\widetilde{gw}$ obtained by applying Formula~\eqref{actionformula} to~$\tg$ and~$\tw$ is well-defined, and we have
\[|g\act w - \widetilde{gw}|\le 68\,\delta^{\frac32}\nmg^3\epsilon + 136\,\delta^{\frac32}\nmg^2\eta\text{.}\]
\end{lem}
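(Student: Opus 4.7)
The plan is to split $g\cdot w = NM^{-1}$ into numerator $N = Aw+B$ and denominator $M = Cw+D$ (with analogues $\tilde N$, $\tilde M$ built from $\tg$ and $\tw$) and control each separately. Working in the division algebra $\Hamil$, the algebraic identity
\[
 NM^{-1} - \tilde N\tilde M^{-1} = (N-\tilde N)\tilde M^{-1} + NM^{-1}(\tilde M - M)\tilde M^{-1},
\]
combined with $|NM^{-1}| = |g\cdot w| < 1$, will reduce the task to additive bounds on the numerator and denominator errors together with a lower bound on $|\tilde M|$, which will simultaneously show that $\widetilde{gw}$ is defined.

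The linear perturbation bounds are routine. From $A = a+\bar d+(b-\bar c)j$ and its analogues, the inequality $|x+\bar y|^2+|z-\bar u|^2 \le 2(|x|^2+|y|^2+|z|^2+|u|^2)$ gives $|A|,|B|,|C|,|D|\le\sd\,\nmg$ and, applied to the entries of $g-\tg$, $|A-\tA|,\dots,|D-\tD|\le\sd\,\eta$. Using $|w|,|\tw|<1$ and writing $N-\tilde N=A(w-\tw)+(A-\tA)\tw+(B-\tB)$ (and similarly for $M-\tilde M$) then yields
\[
 |N-\tilde N|,\ |M-\tilde M| \le \sd(\nmg\,\epsilon + 2\eta).
\]

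The heart of the argument is the lower bound $|M|^2\ge 4/\delta$, equivalently $|Cw+D|\ge 2/\rd$. This comes from the identity
\[
 |Cw+D|^2 - |Aw+B|^2 = 4(1-|w|^2),
\]
which reflects that $g$ acts on $\hypb$ by isometries; it is $\R$-affine in $w$ and can be verified directly on the real basis $\{1, i, j\}$ of $\C+\R j$, the crucial ingredients being $|D|^2-|C|^2 = |A|^2-|B|^2 = 4$ and $\det g = 1$. The hypothesis $(\nmg\,\epsilon+2\eta)^2 \le 1/(3\delta)$ then gives $|M-\tilde M| \le \sqrt{2/(3\delta)} < 1/\rd$, so $|\tilde M| \ge |M|-|M-\tilde M| \ge 1/\rd > 0$; in particular $\tilde M$ is invertible in $\Hamil$ and $\widetilde{gw}$ is well-defined. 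Taking norms in the algebraic identity,
\[
 |g\cdot w-\widetilde{gw}| \le \frac{|N-\tilde N| + |\tilde M - M|}{|\tilde M|} \le 2\sd\,\rd(\nmg\,\epsilon + 2\eta).
\]
Finally, $\delta \ge 1$ and $\nmg^2 \ge 2\,\Re\det g = 2$ give $\rd \le \delta^{3/2}$, $\nmg \le \nmg^3/2$ and $1 \le \nmg^2/2$; substituting these converts the bound into the target form (the stated constants $68$ and $136$ leave ample slack).

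The main obstacle is really the identity for $|M|^2-|N|^2$: a naive bound from $|D|^2-|C|^2 = 4$ alone only yields $|M|\ge\sd/\nmg$, which has no $\delta$-dependence and therefore cannot reproduce the required scaling as $w$ approaches the boundary. Once this identity is in hand, the rest of the proof is essentially mechanical.
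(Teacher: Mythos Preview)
Your proof is correct and rests on the same key identity as the paper: your $|Cw+D|^2-|Aw+B|^2=4(1-|w|^2)$ is exactly the paper's equation~\eqref{eqrad} rewritten, and both use it to lower-bound $|Cw+D|$. Where you differ is in the error decomposition. The paper rewrites $g\cdot w=\frac{1}{|Cw+D|^2}(Aw+B)(\bar w\bar C+\bar D)$, splitting the error into a scalar part (controlled via the mean value theorem applied to $t\mapsto t^{-2}$) and a quaternion-product part, which is what produces the somewhat large constants $68$ and $136$. Your quaternionic identity $NM^{-1}-\tilde N\tilde M^{-1}=(N-\tilde N)\tilde M^{-1}+NM^{-1}(\tilde M-M)\tilde M^{-1}$ together with $|NM^{-1}|<1$ is cleaner: it avoids the mean value step entirely and yields the sharper bound $2\sqrt{2}\,\sqrt{\delta}\,(\nmg\epsilon+2\eta)$, which you then relax to match the stated form. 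You also extract the slightly stronger lower bound $|Cw+D|^2\ge 4/\delta$ (the paper only uses $4/(3\delta)$) directly from $|Aw+B|^2\ge 0$.

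One small wording issue: the expression $|Cw+D|^2-|Aw+B|^2$ is not $\R$-affine in $w$ but quadratic; what is true is that the quadratic part is $(|C|^2-|A|^2)|w|^2=-4|w|^2$ (this uses $|A|=|D|$ and $|B|=|C|$, which follow immediately from the formulas for $A,B,C,D$), so after matching it with the quadratic part $-4|w|^2$ of $4(1-|w|^2)$ the \emph{remaining} verification is affine and can indeed be checked on the basis $\{1,i,j\}$. This does not affect the validity of the argument.
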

\begin{proof}
By direct computation we have~$|A-\tA|\le\sd\eta$ and~$|A|\le \sd\nmg$, and the same inequalities for~$B,C,D$. We write
\[g\act w = (Aw+B)(Cw+D)^{-1}=\frac{1}{|Cw+D|^2}(Aw+B)(\overline{w}\overline{C}+\overline{D})\]
and similarly for~$\tg,\tw$. Another direct computation gives
\begin{equation}\label{eqrad}
|w|^2-|g\act w|^2 = \left(1-\frac{4}{|Cw+D|^2}\right)(|w|^2-1)\text{,}
\end{equation}
showing that
\[\frac{1}{|Cw+D|^2}\le \frac14(1 + 2\delta)\le\frac34\delta\ \text{ and }\ \frac{4}{3\delta} \le |Cw+D|^2\text{.}\]
By the triangle inequality, adding and substracting~$A\tw$ gives
\[|Aw-\tA\tw|\le \sd\nmg\epsilon+\sd\eta\]
and the same inequality for~$Cw$. We get
\[|(Cw+D)-(\tC\tw+\tD)|^2 \le 2(\nmg\epsilon+2\eta)^2\le \frac{|Cw+D|^2}{2}\]
since by hypothesis we have~$(\nmg\epsilon+2\eta)^2\le\frac{1}{3\delta}$. In particular~$\tC\tw+\tD\neq 0$ and~$\widetilde{gw}$ is well-defined. By the mean value theorem this gives
\[||Cw+D|^{-2}-|\tC\tw+\tD|^{-2}|\le (6\delta)^{\frac32}(\nmg\epsilon+2\eta)\]
We also get
\begin{eqnarray*}
&|(Aw+B)(\overline{w}\overline{C}+\overline{D}) - (\tA\tw+\tB)(\overline{\tw}\overline{\tC}+\overline{\tD})|\\
\le &|Aw+B|(\sd\nmg\epsilon+2\sd\eta)+2|Cw+D|(\sd\nmg\epsilon+2\sd\eta)\\
\le &(2\sd\nmg)(\sd\nmg\epsilon+2\sd\eta)+(2\sd\nmg)(2\sd\nmg\epsilon+4\sd\eta)\\
= &12\nmg^2\epsilon + 24\nmg\eta\text{.}
\end{eqnarray*}
Finally we have
\begin{eqnarray*}
&|(Aw+B)(Cw+D)^{-1}-(\tA\tw+\tB)(\tC\tw+\tD)^{-1}|\\
\le & |g\act w||Cw+D|^2(6\delta)^{\frac32}(\nmg\epsilon+2\eta)+\frac{2}{|Cw+D|^2}(12\nmg^2\epsilon + 24\nmg\eta)\\
\le & (24\rs+9)\delta^{\frac32}\nmg^3\epsilon + (48\rs+18)\delta^{\frac32}\nmg^2\eta\\
\le & 68\,\delta^{\frac32}\nmg^3\epsilon + 136\,\delta^{\frac32}\nmg^2\eta
\end{eqnarray*}
as claimed.
\end{proof}

In the following, we want to maintain the property~$(\nmg\epsilon+2\eta)^2\le\frac{1}{3\delta}$ for every element~$g$ and every point~$w$ considered, where~$\epsilon$ is the imprecision on the points in~$\hypb$, $\eta$ the imprecision on the elements~$g$ considered, and~$\eta=\frac83\epsilon$.

We now describe the modification of the algorithms for the floating-point version. In the reduction algorithm (Algorithm~\ref{redalg}), we choose~$\alpha>0$ and in Step~\ref{stepcmp} we replace the inequality~$\dist(g w',0) \ge \dist(w',0)$ by~$\frac{4}{|Cw'+D|^2}\le 1+\alpha$. Since we have~$w'\in\Ext(g)$ if and only if~$|Cw'+D|^2 \ge 4$, the modified condition is indeed an approximation of the exact condition.

\begin{prop}
 Let~$\beta = \alpha - 68\,\delta^{\frac52}M^3\epsilon - 136\,\delta^{\frac52}M^2\eta$ where~$\delta=\frac1{1-|w|^2}$ and~$M=\max_{g\in S}\|g\|$. If~$\beta>0$, then the floating-point version of the reduction algorithm terminates.
\end{prop}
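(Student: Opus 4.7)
The plan is to show that while the loop is running, the squared norm $|\tilde w'|^2$ strictly decreases by at least $\beta/\delta$ at every iteration; since $|\tilde w'|^2 \ge 0$, this forces termination after at most $|\tilde w|^2 \delta/\beta$ iterations. Along the argument, I would use the standing assumption $(\|g\|\epsilon+2\eta)^2 \le 1/(3\delta)$, which guarantees that $\widetilde{g\cdot w'}$ is well-defined and $\tilde w'$ remains in $\hypb$ so that $\delta$ never exceeds its initial value as $|\tilde w'|$ decreases.

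First I would fix one iteration. Let $\tilde w'$ denote the current computed point, with exact reference $w'$ satisfying $|w'-\tilde w'|\le\epsilon$; let $\tilde g \in \tilde S$ be the element chosen in Step~\ref{stepchoose}, and $g \in \SL_2(\C)$ the exact version with $\|\tilde g - g\| \le \eta$. If the loop has not yet exited, the modified test reads $4/|\tilde C \tilde w' + \tilde D|^2 > 1+\alpha$. I would then invoke the intermediate estimate $\bigl||Cw'+D|^{-2} - |\tilde C\tilde w' + \tilde D|^{-2}\bigr| \le (6\delta)^{3/2}(\|g\|\epsilon+2\eta)$ obtained in the proof of Lemma~\ref{lemapprox} to transfer this inequality into a lower bound $4/|Cw'+D|^2 > 1 + \alpha - O(\delta^{3/2})$ on the exact quantity. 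The identity \eqref{eqrad} would then convert this into a lower bound
\[|w'|^2 - |g\cdot w'|^2 \ge \bigl(\alpha - O(\delta^{3/2})\bigr)/\delta\]
on the exact decrease in squared norm per iteration.

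Second, I would apply the main bound of Lemma~\ref{lemapprox} to pass from the exact action $g\cdot w'$ to the computed $\tilde w'_{\mathrm{new}} = \widetilde{g\cdot w'}$, using $|g\cdot w' - \tilde w'_{\mathrm{new}}| \le 68\delta^{3/2}\|g\|^3\epsilon + 136\delta^{3/2}\|g\|^2\eta$ to control $\bigl||g\cdot w'|^2-|\tilde w'_{\mathrm{new}}|^2\bigr|$. Combining this with the first step's lower bound, using $\|g\|\le M$, and absorbing the factor $1/\delta$ from \eqref{eqrad} into the quoted coefficients, I would assemble the desired inequality $|w'|^2 - |\tilde w'_{\mathrm{new}}|^2 \ge \beta/\delta$, with the precise values $68,136$ of the constants in $\beta$ inherited from Lemma~\ref{lemapprox}. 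The conversion $\delta^{3/2}\mapsto\delta^{5/2}$ in the expression for $\beta$ arises from clearing the denominator $1/\delta$ in the per-iteration decrease. Since $|\tilde w'|$ only decreases, $\delta$ is bounded by its initial value throughout the loop, so $\beta/\delta$ is a uniform positive lower bound and termination is immediate.

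The main obstacle will be the constant bookkeeping: one must verify that the contributions from the intermediate estimate $(6\delta)^{3/2}(\|g\|\epsilon+2\eta)$ (which come with coefficients like $4\cdot 6^{3/2}$ after multiplying by $4$ and dividing by $\delta$) are dominated by those from the main Lemma~\ref{lemapprox} bound, so that the coefficients $68$ and $136$ in $\beta$ come essentially from the main bound alone rather than being increased by the intermediate term. A secondary subtlety is to organize the analysis so that the imprecision $\epsilon$ is treated as the precision maintained at every step (via re-rounding), rather than allowing accumulated errors to invalidate the hypothesis $(\|g\|\epsilon+2\eta)^2\le 1/(3\delta)$ of Lemma~\ref{lemapprox}.
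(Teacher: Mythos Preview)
Your approach is essentially the paper's: use the identity~\eqref{eqrad} together with the failed exit test to bound the ``ideal'' progress, then invoke Lemma~\ref{lemapprox} once to pass to the computed point. The only cosmetic difference is that the paper tracks the quantity~$1-|w'|^2$ multiplicatively, rewriting~\eqref{eqrad} as $1-|g\act w'|^2=\frac{4}{|Cw'+D|^2}(1-|w'|^2)$; from the failed test this gives $1-|g\act w'|^2\ge(1+\alpha)(1-|w'|^2)$, and Lemma~\ref{lemapprox} then yields $1-|\widetilde{gw}'|^2\ge(1+\beta)(1-|w'|^2)$, so $1-|w'|^2$ is multiplied by at least~$1+\beta$ at each step while being bounded above by~$1$. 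Your additive formulation $|w'|^2-|\tilde w'_{\mathrm{new}}|^2\ge\beta/\delta$ is the same inequality after rearranging and using $1-|w'|^2\ge 1/\delta$.

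One simplification you can adopt from the paper: it does \emph{not} introduce your transfer step via the intermediate estimate $|\,|Cw'+D|^{-2}-|\tC\tw'+\tD|^{-2}\,|\le(6\delta)^{3/2}(\nmg\epsilon+2\eta)$. Instead it applies the identity~\eqref{eqrad} directly to the stored $w'$ and reads the test condition as a condition on $|Cw'+D|^2$, so that the \emph{only} place an error enters is the single application of Lemma~\ref{lemapprox} when passing from $g\act w'$ to $\widetilde{gw'}$. This is what makes the constants~$68$ and~$136$ appear cleanly in~$\beta$, and it dissolves the ``main obstacle'' you flag: there is no second contribution to dominate.
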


\begin{proof}
 Formula~\eqref{eqrad} may be rewritten
\[1-|g\act w|^2 = \frac{4}{|Cw+D|^2}(1-|w|^2)\text{,}\]
which gives, if the modified condition of Step~\ref{stepcmp} is not satisfied
\[1-|g\act w'|^2 \ge (1+\alpha)(1-|w'|^2)\text{.}\]
Lemma~\ref{lemapprox} gives
\[1-|\widetilde{gw}'|^2 \ge (1+\beta)(1-|w'|^2)\text{,}\]
so~$1-|w'|^2$ is multiplied by~$1+\beta$ at each step of the algorithm. Since we also have~$1-|w'|^2\le 1$, the algorithm terminates.
\end{proof}

We want to use a uniform~$\alpha$ that tends to~$0$ as~$\epsilon\to 0$. For this, we assume that we only consider points~$w$ such that~$1-|w|^2\ge 2\epsilon^{\frac29}$ and elements~$g$ such that~$\nmg\le \epsilon^{-\frac19}$. Assuming that~$\epsilon<10^{-9}$ we can then take~$\alpha = 18\epsilon^{\frac19}$. These assumptions also ensure that~$(\nmg\epsilon+2\eta)^2\le\frac{1}{3\delta}$, and are compatible since the points~$g\act 0$ that we have to consider satisfy~$1-|g\act 0|^2 = \frac{4}{\nmg^2+2}\ge \frac{2}{\nmg^2}\ge 2\epsilon^{\frac29}$.

There is no change in KeepSameGroup (Algorithm~\ref{ksg}): the same argument shows that the algorithm terminates, regardless of finite precision in the computations. 

The routine CheckPairing (Algorithm~\ref{cp}) should only consider an edge~$e$ contained in~$\I(g)$ as not being paired if there is~$x\in e$ and we have the stronger inequality~$|C\widetilde{gx}+D|^2 < \frac{4}{1+\alpha}$ and~$C,D$ correspond to~$h$ for some~$h\in S$. This ensures that the floating-point reduction will yield a non-trivial element, since at least one step of reduction will be performed. 

The routines CheckCycleCondition (Algorithm~\ref{ccc}) and CheckComplete (Algorithm~\ref{chcomplete}) contain only finite loops regardless of the use of finite precision, so there is no change in them. 


\begin{prop}
 The floating-point version of the Normalized basis algorithm (Algorithm~\ref{basisalg}) terminates.
\end{prop}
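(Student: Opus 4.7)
The plan is to mirror the proof of Proposition~\ref{propbasisalg}(ii), checking at each step that the finite-precision modifications do not break the termination arguments. Algorithm~\ref{basisalg} contains two nested REPEAT loops; I would argue termination of the inner and outer loops separately.

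For the inner loop, I would first verify that each of the four subroutines terminates individually. KeepSameGroup terminates by exactly the discreteness argument of its exact-case proof, which does not use precision; the reduction subcalls inside it and inside CheckPairing terminate by the preceding proposition, provided the standing hypotheses $1-|w|^2\ge 2\epsilon^{2/9}$ and $\nmg\le \epsilon^{-1/9}$ hold; CheckCycleCondition and CheckComplete contain only finite loops. Across iterations $\Ext(S)$ is monotonically non-increasing because $S$ only grows. The essential point is that the strengthened threshold $|C\widetilde{gx}+D|^2<\frac{4}{1+\alpha}$ in CheckPairing, combined with Lemma~\ref{lemapprox}, ensures that every element added by a subroutine is genuinely shrinking rather than a precision artefact, so the decrease is strict as long as the inner loop does not exit. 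Finiteness then follows because the $\gamma\in\Gamma$ capable of further shrinking a given $\Ext(S)$ lie in a discrete, norm-bounded subset.

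For the outer loop, the argument is essentially that of Proposition~\ref{propbasisalg}(ii): since Enumerate (Algorithm~\ref{enumbigball}) is a complete enumeration, after finitely many outer iterations $S$ contains a defining set for the true Dirichlet domain $D_0(\Gamma)$, which has finitely many faces by geometric finiteness. Because the $\alpha$-relaxed face-pairing, cycle, and completeness conditions are strictly weaker than the exact ones, they are satisfied by $D_0(\Gamma)$, so the inner loop exits with $\Ext(S)$ a floating-point approximation of $D_0(\Gamma)$. IsFullGroup then evaluates $V/\covol(\Gamma)$ using Algorithm~\ref{algovol} and formula~\eqref{covolumeformula}; since $V\approx \covol(\Gamma)$ the ratio is less than~$2$ and the outer loop exits.

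The main obstacle I anticipate is the quantitative control of the $\alpha$-tolerance: one must ensure that near-tangent faces and cycle angles close to a submultiple of $2\pi$ in $D_0(\Gamma)$ are neither missed (so that the inner loop would stop but produce a non-basis) nor repeatedly triggered (so that the inner loop would run forever adding spurious elements). Both concerns reduce to maintaining the standing precision hypotheses throughout the computation, which by the calibration $\alpha=18\epsilon^{1/9}$ holds uniformly as soon as $\epsilon$ is chosen small enough relative to the geometry of $D_0(\Gamma)$.
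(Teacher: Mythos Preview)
Your approach differs substantially from the paper's. The paper gives a two-line argument: (a) each subroutine terminates by the preceding discussion, and (b) the standing restriction~$\|g\|\le\epsilon^{-1/9}$, together with discreteness of~$\Gamma$, means that only finitely many group elements can ever be considered by the algorithm; hence it terminates. This argument is unconditional in~$\epsilon$ and makes no claim whatsoever about correctness --- indeed the paper explicitly remarks afterwards that with insufficient precision the algorithm may terminate with an error or a wrong answer.

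You instead argue that the algorithm converges to the true Dirichlet domain~$D_0(\Gamma)$ and is then recognised as complete by IsFullGroup. This is a stronger conclusion (termination \emph{with a correct result}) but it rests on the extra hypothesis, acknowledged only in your final paragraph, that~$\epsilon$ is small enough relative to the geometry of~$D_0(\Gamma)$. Without that hypothesis your outer-loop argument fails: once the norm restriction~$\|g\|\le\epsilon^{-1/9}$ is in force, Enumerate is no longer a complete enumeration of~$\Gamma$, so if some face-pairing element of~$D_0(\Gamma)$ has norm exceeding~$\epsilon^{-1/9}$ the relaxed Poincar\'e conditions may never be met and IsFullGroup may never return true. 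The paper sidesteps all of this by not caring whether the output is correct. Your inner-loop argument does contain the relevant finiteness idea (``a discrete, norm-bounded subset''), but you do not exploit it globally; the paper's point is precisely that this single norm bound already forces the entire state space of the algorithm to be finite, which is all that termination requires.
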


\begin{proof}
 By the arguments above, each of the routines terminates. Moreover, because of precision restriction we impose~$\nmg\le\epsilon^{-\frac19}$ for every element~$g$ of the group considered in the algorithm, so that only finitely many~$g$ can be used, so the algorithm terminates.
\end{proof}

Of course if the precision chosen is insufficient, the algorithm may terminate with an error or a wrong answer, but with Riley's methods~\cite{riley}, we can use Poincar\'e's theorem with the approximate fundamental domain to prove that the computed presentation is correct. Alternatively, we could check the fundamental domain algebraically, but this is likely to be time-consuming.



\subsection{Master algorithm}

As a summary, this is our master algorithm for computing an arithmetic Kleinian group associated with a maximal order.
\begin{algorithm}[H]
\caption{Master algorithm}
\label{masteralgo}
\begin{algorithmic}[1]
	\REQUIRE A maximal order~$\order$ in a Kleinian quaternion algebra~$B$
	\ENSURE A finitely presented group~$G$, and two computable group homomorphism~$\phi: G\to\Gamma(\order)$ and~$\psi : \Gamma(\order)\to G$, inverse of each other
	\STATE Choose an embedding~$\rho: B\hookrightarrow \mat_2(\C)$ s.t. the point~$0$ has trivial stabilizer in the group~$\Gamma(\order)=\rho(\order^1)/\{\pm 1\}$
	\STATE $V \leftarrow \covol(\Gamma(\order))$ computed with Formula~\eqref{covolumeformula}
	\WHILE{IsFullGroup($S$)}
	    \STATE compute~$V'=\vol(\Ext(S))$ with Algorithm~\ref{algovol}
	    \RETURN $V' < 2V$
	\ENDWHILE
	\STATE Enumerate $\leftarrow$ Algorithm~\ref{enumbigball} or Algorithm~\ref{enumsmallballs}
	\STATE $S \leftarrow$ output of the Normalized Basis Algorithm~\ref{basisalg}
	\STATE $R \leftarrow$ inverse, cycle and reflection relations from Theorem~\ref{presthm}
	\STATE $G\leftarrow \gp{S | R}$
	\STATE Let~$\phi : G\to\Gamma(\order)$ be the map that evaluates words in the generators
	\STATE Let~$\psi : \Gamma(\order)\to G$ be the map that writes elements as words in the generators using Algorithm~\ref{redalg}
	\RETURN $G,\phi,\psi$
\end{algorithmic}
\end{algorithm}

\begin{rmks}\label{remdiffgroup}~
\begin{itemize}
 \item If we want to compute the group that is the image of a smaller order, or more generally a finite index subgroup~$\Gamma'$ of the group~$\Gamma(\order)$ given by a maximal order~$\order$, we can compute first a normalized basis for the larger group~$\Gamma(\order)$, and then compute the index by standard coset enumeration techniques. This gives the covolume of the smaller group, and even a set of generators for it, so we can then apply the same algorithm we described.
  \item We may also want to compute a maximal group in the commensurability class of~$\Gamma(\order)$. There are infinitely many conjugacy classes of such maximal groups, and they can be obtained as follow. Let~$\order'$ be a maximal order in~$B$, and~$S$ a finite set of primes of~$F$ that split in~$B$. Let~$\order''\subset\order'$ be an Eichler order of level~$\mathfrak{N}$ where~$\mathfrak{N}$ is the product of the primes in~$S$, and define~$\Gamma_{S,\order'}$ to be the normalizer of~$\order''$ in~$B^\times$.
Then every maximal group in the commensurability class of~$\Gamma(\order)$ is conjugate to a group~$\Gamma_{S,\order'}$ for some set~$S$ and some maximal order~$\order'$, which can be taken from a set of representatives of the conjugacy classes of maximal orders in~$B$. Note however that some of the groups~$\Gamma_{S,\order'}$ may not be maximal.
Since each of these groups is the image in~$\PSL_2(\C)$ of the normalizer of an order in~$B$, we may use the same enumeration techniques. The index is given in terms of a class group and a finite quotient of units in~$\Z_F$, which can be computed, so again we get the covolume of this larger group, and can apply the same technique.
The reader can refer to~\cite{Borel-volumes} or~\cite[Section 11.4]{mac} for details on maximal groups.
\end{itemize}
\end{rmks}

\newpage 

\section{Examples}\label{secex}

The author has implemented the algorithm described in the previous section in the computer system Magma~\cite{magma}. Our package \texttt{KleinianGroups} is available at \url{http://www.normalesup.org/~page/software.html}. Here we show some examples of the output of this code. In sections~\ref{cmpalgo} and~\ref{relprev}, the computations are performed on a $1.73$~GHz Intel i7 processor with Magma v2.18-4. The more extensive computations of sections~\ref{bigex} and~\ref{effi} are run on a $2.5$~GHz Intel Xeon E5420 processor from the PLAFRIM experimental testbed with Magma v2.17-12.

\subsection{Comparison between subalgorithms}\label{cmpalgo}
\subsubsection{Comparison between the normalized basis algorithms}\label{cmpnb}

Consider the ATR sextic field~$F$ of discriminant $-92779$ generated by an element~$t$ such that~$t^6 -t^5 -2t^4 +3t^3 -t^2 -2t +1=0$, and let~$\Z_F$ be its ring of integers. Let~$B=\quatalg{-1,-1}{F}$ be the quaternion algebra ramified only at the real places of~$F$. Let~$\order$ be a maximal order in~$B$; the choice does not matter as they are all conjugate. The Kleinian group~$\Gamma(\order)$ has covolume~$0.3007\dots$. We compare our algorithm with the naive Algorithm~\ref{naivebasisalg}. Both need a precomputation of~$3$ seconds for the computation of the coefficients of the Lobachevsky power series and~$4$ seconds for the evaluation of the Dedekind zeta function at~$2$. Our algorithm then computes a Dirichlet domain in~$2$ seconds, and enumerates~$37$ elements of~$\order$, yielding~$21$ elements of~$\Gamma(\order)$. The naive algorithm (actually we only removed the routine CheckPairing) computes the same Dirichlet domain in~$48$ seconds and has to enumerate~$16\,246$ elements of~$\order$, 
yielding~$1713$ elements of~$\Gamma(\order)
$. The fundamental domain (Figure~\ref{picdeg6}) has 18 faces and 42 edges.


\begin{figure}[ht]
\centering
\includegraphics*[width=10.9cm,keepaspectratio=true]{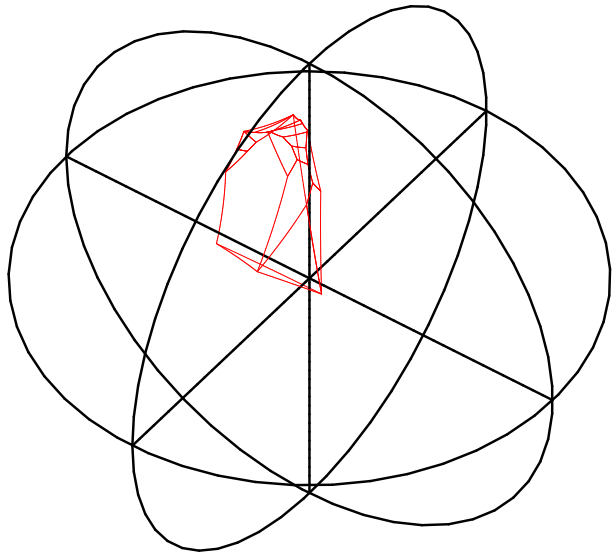}
\caption{Dirichlet domain of a Kleinian group over a sextic field}\label{picdeg6}
\end{figure}

\subsubsection{Comparison between the enumeration algorithms}\label{exenum}

Consider the ATR number field~$F$ of degree~$8$ and discriminant $-407793664$, generated by an element~$t$ such that~$t^8-4t^7+4t^6+2t^5-8t^4+4t^3+5t^2-2t-1=0$, and let~$\Z_F$ be its ring of integers. Let~$B=\quatalg{-1,-1}{F}$ be the quaternion algebra ramified only at the real places of~$F$. Let~$\order$ be a maximal order in~$B$; the choice does not matter as they are all conjugate. The Kleinian group~$\Gamma(\order)$ has covolume~$56.509\dots$. We compare the performance of our algorithm when using the enumeration algorithms~\ref{enumbigball} or~\ref{enumsmallballs}. With the deterministic enumeration algorithm~\ref{enumbigball}, our code computes a fundamental domain in~$12$~hours and~$45$~minutes ($45943$ seconds, most of which is enumeration), and enumerates~$84\,159\,799$ vectors, yielding~$1600$ group elements. With the probabilistic enumeration algorithm~\ref{enumsmallballs}, our code computes the same Dirichlet domain in~$71$~seconds, and only needs to enumerate~$3511$ vectors, yielding~$164$ 
group elements. It spends~$2$ 
seconds for computing the value of the zeta function, $16$ seconds for enumeration, $3$ seconds for the routine KeepSameGroup, $40$ for CheckPairing and $10$ for computing the volume of the polyhedron. The fundamental domain (Figure~\ref{picoctic}) has~$202$~faces and~$582$~edges.

\begin{figure}[ht]
\centering
\includegraphics*[width=10.9cm,keepaspectratio=true]{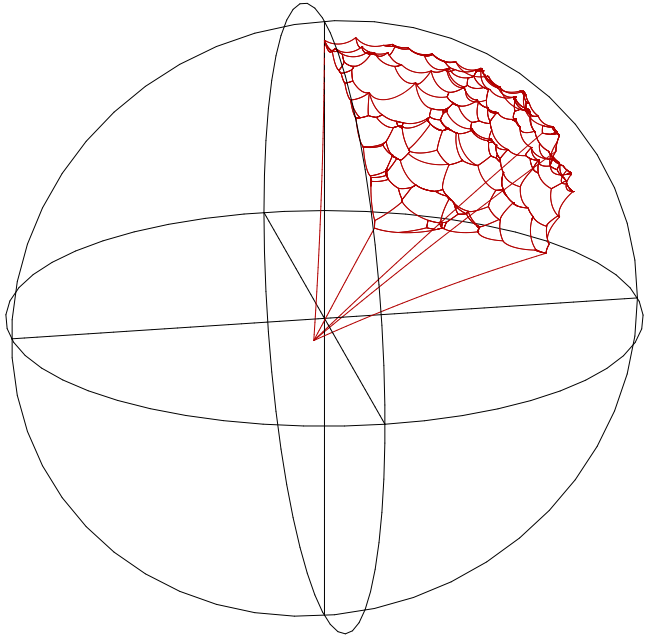}
\caption{Dirichlet domain of a Kleinian group over an octic field}\label{picoctic}
\end{figure}

\subsection{Relation to previous work}\label{relprev}
In this section we show how to recover examples covered by earlier work with our algorithm. When available, we provide a comparison of running times between public implementations and our code. The reader should keep in mind that these are only comparisons between implementations since the complexity of the algorithms is usually unknown.

\subsubsection{Bianchi groups}
Let~$F$ be an imaginary quadratic field with ring of integers~$\Z_F$. Consider the quaternion algebra~$B=\mat_2(F)$ and the maximal order~$\order=\mat_2(\Z_F)$. Then the group~$\Gamma(\order)=\PSL_2(\Z_F)$ is called a \emph{Bianchi group}. There exists already several programs computing fundamental domains for these groups~\cite{rahmhomologies, yasaki2010hyperbolic} but they only work for Bianchi groups while ours deals with general arithmetic Kleinian groups. Table~\ref{tablebianchi} gives the running time (in seconds) of our Magma package and other public implementations. 
The first three columns correspond to the discriminant of the field, its class number and the covolume of~$\PSL_2(\Z_F)$. The last four columns display running times in seconds: \texttt{Bianchi.gp}~\cite{rahmhomologies} written in GP~\cite{PARI2} implementing Swan's algorithm for~$\PSL_2(\Z_F)$, our code~\texttt{KleinianGroups} computing~$\PSL_2(\Z_F)$, the code provided by Magma implementing the algorithm of~\cite{yasaki2010hyperbolic} using Vorono\"i theory for~$\PGL_2(\Z_F)$, and our code for~$\PGL_2(\Z_F)$. Note that it is not surprising that computing~$\PGL_2(\Z_F)$ is faster: the group is larger by an index~$2$, so the covolume is twice smaller and our computation is~$4$ times shorter (see also section~\ref{effi}).


\begin{table}
\begin{tabular}{|c|c|c||c|c|c|c|}
  \hline
  $\Delta_F$ & $h_F$ & volume & \texttt{Bianchi} & \texttt{KG}, $\PSL_2$ & Magma & \texttt{KG}, $\PGL_2$\\
  \hline
  $-3$ & 1 & 0.169 & 0.015 & 0.93 & 0.43 & 0.83\\
  $-15$ & 2 & 3.139 & 0.152 & 0.92 & 0.8 & 2.32 \\
  $-23$ & 3 & 6.449 & 0.176 & 1.22 & 1.11 & 2.06\\
  $-39$ & 4 & 13.80 & 2.37 & 9.44 & 3.05 & 4.36\\
  $-47$ & 5 & 19.43 & 3.83 & 19.9 & 5.33 & 6.96\\
  $-71$ & 7 & 37.53 & 21.6 & 36.6 & 17.8 & 13.2\\
  $-87$ & 6 & 44.72 & 25.7 & 45.1 & 17.3 & 16.4\\
  $-95$ & 8 & 57.06 & 41.4 & 43.8 & 33.9 & 19.3\\
  $-119$ & 10 & 82.93 & 7080. & 137. & 99.5 & 25.6\\
  $-167$ & 11 & 132.3 & 1545. & 391. & 188. & 80.9\\
  $-199$ & 9 & 148.5 & 3840. & 393. & 224. & 92.7\\
  \hline
\end{tabular}
\caption{Running times for Bianchi groups}\label{tablebianchi}
\end{table}

\subsubsection{Arithmetic Fuchsian groups}

Let~$F$ be a totally real field and~$B$ a quaternion algebra ramified at every infinite place but one. Let~$\order$ be an order in~$B$. Then the group~$\Gamma(\order) = \order^1/\{\pm 1\}$ embeds into~$\PSL_2(\R)$, in which it is discrete with finite covolume: it is an \emph{arithmetic Fuchsian group}. Using the action of~$\PSL_2(\R)$ on the upper half-plane J.~Voight~\cite{voightfuchsian} was able to compute fundamental domains for these groups. Since we have~$\PSL_2(\R)\subset\PSL_2(\C)$, a Fuchsian group can be seen as a Kleinian group leaving a geodesic plane stable. Using this we can also compute arithmetic Fuchsian groups with our code. Our probabilistic enumeration Algorithm~\ref{enumsmallballs} leads to an improvement in high degree. As an example, consider the totally real field~$F$ with discriminant~$9685993193$, generated by an element~$t$ such that~$t^9-2t^8-7t^7+11t^6+15t^5-15t^4-10t^3+7t^2+2t-1=0$. Let~$B=\quatalg{a,b}{F}$ with~$a=-3t^8 + 2t^7 + 30t^6 - 8t^5 - 93t^4 + 90t^2 + 2t - 26$ 
and~$b=-1$. It is ramified at every real place but one. Let~$\order$ be a maximal order in~$B$. The Fuchsian group~$\Gamma(\order)$ has coarea~$103.67\dots$; our code computes a fundamental domain for this group in~$13$ minutes ($735$ seconds). The code provided by Magma and implementing the algorithm of~\cite{voightfuchsian} computes a fundamental domain for~$\Gamma(\order)$ in~$1$~hour and~$10$~minutes ($4204$ seconds).

\subsubsection{The Hamiltonians over $\Z \bigl[\frac{1+\sqrt{-7}}{2}\bigr]$}

Consider the field~$F=\Q(\sqrt{-7})$ and the quaternion division algebra~$\quatalg{-1,-1}{F}$. Then~$\order=\Z_F+\Z_F i+\Z_F j+\Z_F ij$ is a non-maximal order in~$B$. A fundamental domain for this group was computed by C.~Corrales, E.~Jespers, G.~Leal and \'A.~del~R\'io in \cite{Corrales}. Using the method of Remark~\ref{remdiffgroup}, our code can compute a fundamental domain for the group~$\Gamma(\order)$. It computes first a maximal order~$\order'\supset\order$, and a fundamental domain for~$\Gamma(\order')$ (having covolume~$0.8889\dots$). By coset enumeration, it finds that~$\Gamma(\order)$ has index~$9$ in the larger group, and computes a fundamental domain for the initial group~$\Gamma(\order)$. The overall computation takes~$15$ seconds.

\subsection{A larger example}\label{bigex}

Consider the ATR field~$F$ generated by an element~$t$ such that~$t^{10}+4t^9-18t^7-27t^6+26t^5+57t^4-2t^3-33t^2-10t+1=0$, having discriminant~$-546829505431\simeq -5.5\, 10^{11}$. Let~$B$ be the quaternion algebra~$\quatalg{a,b}{F}$ where~$a=\frac12(-25t^9 - 82t^8 + 61t^7 + 404t^6 + 376t^5 - 932t^4 - 718t^3 + 590t^2+ 368t - 33)$ and~$b=-1$. It is ramified exactly at the real places of~$F$. Let~$\order$ be a maximal order in~$B$. The group~$\Gamma(\order)$ has covolume~$1783.7\dots$. Our code computes a fundamental domain for this group in~$23$ hours and~$39$ minutes ($85150$ seconds). It spends $5.3$\% of the time for enumeration, $5.8$\% for the routine KeepSameGroup, $87.7$\% for CheckPairing and $1.3$\% for computing the volume of the polyhedron. The fundamental domain has~$5434$~faces and~$16252$~edges.

\subsection{Efficiency of the algorithm}\label{effi}

\begin{figure}[ht]
\centering
\includegraphics*[width=7cm,keepaspectratio=true]{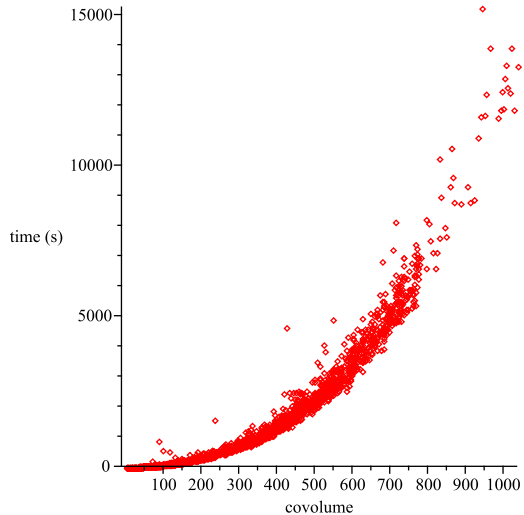}
\caption{Running time of the algorithm}\label{plottime}
\end{figure}

According to geometers, the parameter encoding the complexity of an arithmetic Kleinian group is the covolume. In practise it is simpler to vary the discriminant of the base field (and hence the degree) and the norm of the discriminant of the quaternion algebra. It seems hard to estimate the running time of the algorithm in terms of these parameters. First, we do not know any bound on the radii of the isometric spheres containing the faces of the closure of the Dirichlet domain, or of generators of the group, so we do not know how many elements we have to enumerate. Then, even if we have generators of the group, we do not know how long the normalized basis algorithm could run before terminating (see also Remark \ref{rmkterm}).

We present numerical data obtained in a family. Since the running time increases very quickly with the discriminant of the field, we fixed the base field and varied the discriminant of the algebra. The field we chose is the ATR cubic field of discriminant~$-23$. We computed groups~$\Gamma(\order)$ for every algebra with discriminant less than~$10\,000$, and one algebra every ten with discriminant less than~$15\,000$.

Analysis of this data shows that the running time is approximately proportional to the square of the covolume, with a few exceptionnally slow computations. We explain this as follows: in almost all cases, the enumeration appears to take negligible time, and the longest part is the computation of the fundamental domain itself; moreover the data (Figure \ref{plotfaces}) seem to indicate that the number of faces is proportional to the covolume (we have such a lower bound since the volume of a hyperbolic tetrahedron is bounded by~$3\Loba(\frac{\pi}{3})$), and we know that our algorithm to compute the domain given the faces is quadratic.

\begin{figure}[ht]
\centering
\includegraphics*[width=7cm,keepaspectratio=true]{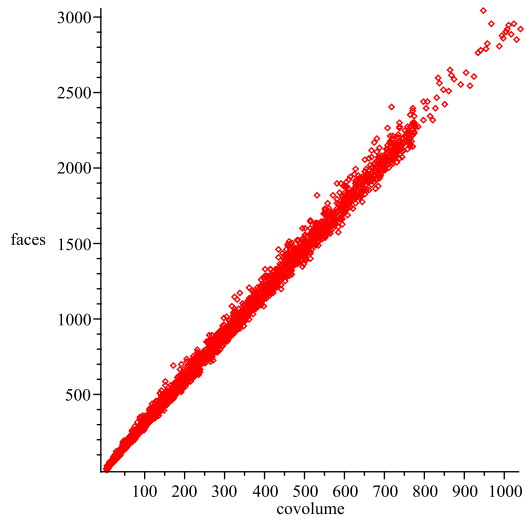}
\caption{Number of faces of the closure of the Dirichlet domains}\label{plotfaces}
\end{figure}

\bibliographystyle{alpha}
\bibliography{kln_gps}

\end{document}